\newtheorem{theorem}{Theorem}
\newtheorem{lemma}{Lemma}
\newtheorem{proposition}{Proposition}
\newtheorem{conjecture}{Conjecture}
\theoremstyle{definition}
\newtheorem{example}{Example}
\title{Irrational braided generalized near-groups}
\author{Andrew Schopieray}
\date{}
\begin{document}

\maketitle
\begin{abstract}
We classify braided generalized near-group fusion categories whose global dimension is not an integer; there are exactly two up to Grothendieck equivalence and taking products with braided pointed fusion categories.
\end{abstract}

\section{Introduction}

The prototypical examples of fusion categories, in the sense of \cite{ENO}, are the categories of finite dimensional representations of finite groups.  Fusion categories of integer Frobenius-Perron dimension,  like these, arise as extensions of representation categories of finite dimensional quasi-Hopf algebras \cite[Theorem 8.33]{ENO}\cite[Proposition 1.8]{2019arXiv191212260G}.  Therefore, it is reasonable to consider these as classical objects.  Fusion categories with irrational Frobenius-Perron dimension are more modern creations; the first arose from the mathematical physics literature in the 1980's and can be constructed from representation categories of quantum groups at roots of unity \cite{MR4079742}.  These nonclassical examples have commutativity structures, called braidings, which make them more amenable to study than other families of potentially noncommutative fusion categories.

\par Here we continue the study of braided fusion categories focusing on those with elementary fusion rules.  The most elementary are the pointed fusion categories, or those whose simple objects are invertible.  These are classified by pre-metric groups \cite{MR1250465}, i.e.\ pairings of a finite abelian group $G$ and a quadratic form $q$.  The next level of complexity would be to assume there exists at least one noninvertible simple object \cite{220807319}.  When the global dimension is irrational, the invertible objects act transitively on the noninvertible simple objects \cite[Lemma 2.6]{220807319}.  Fusion categories with this transitivity property have been called generalized near-group fusion categories \cite{MR4167662,thornton2012generalized}.   Using number-theoretical constraints on global dimensions from \cite{MR3943751} and results on two-orbit fusion rings developed in \cite{220807319}, we prove in Theorem \ref{thm:irr} that there are exactly two Grothendieck equivalence classes of braided generalized near-group fusion categories with irrational global dimension up to taking products with braided pointed fusion categories.  This generalizes the known results for irrational braided generalized near-group fusion categories when the braiding is assumed to be nondegenerate \cite[Theorem IV.5.2]{thornton2012generalized} or slightly nondegenerate \cite[Theorem 5.8]{MR4167662}, but our reasoning does not rely on either of these results.

\par There are numerous reasons to seek classification results for braided generalized near-group fusion categories.  From the motivation of classical group theory, this classification contains as a subset a description of all finite groups $G$ such that the linear characters (degree 1) act on the nonlinear characters (degree larger than 1) transitively.  When the number of nonlinear characters is small \cite{palfy,MR222160}, this classification already contains multiple infinite families of finite groups and several interesting isolated examples.  From the motivation of representation theory, braided fusion categories give rise to braid group representations; for rigorous details and history we refer the reader to the introduction of \cite{MR2375313} and references therein.  Roughly, one can associate to any object $X$ in a braided fusion category and positive integer $n$, a representation of the braid group $B_n$ acting on $X^{\otimes n}$.  In all known examples when the Frobenius-Perron dimension of a braided fusion category is an integer, the images of these braid group representations factor over finite groups, and it has been conjectured that these two properties of braided fusion categories are equivalent \cite{MR2832261}.  Families of braided fusion categories which straddle the line between integer and non-integer Frobenius-Perron dimension, for example braided generalized near-group fusion categories, serve as a litmus test for such conjectures.  Furthermore, there is an inherent motivation to classify braided fusion categories as a class of algebraic objects in and of themselves, and as a subset of the larger class of fusion categories.  For example, for each positive integer $n$, it is known that there are finitely many braided fusion categories of rank $n$ \cite[Theorem 11]{MR4309554}; the corresponding statement for fusion categories is often conjectured, but is not proven.  Unsurprisingly, many braided fusion categories of small rank have elementary fusion rules and moreover have either group-like or generalized near-group fusion.

\par As an epilogue, we briefly discuss the integer Frobenius-Perron dimension case.  A large family of braided generalized near-group fusion categories with integer Frobenius-Perron dimension are the (non-generalized) near-group type, i.e.\ there exists exactly one noninvertible isomorphism class of simple objects.  These consist of the nilpotent examples, which are entirely described by the braided Tambara-Yamagami fusion categories \cite{siehler}, the symmetrically braided fusion categories $\mathrm{Rep}(F_q)$ where $F_q$ is the Frobenius group $\mathbb{F}_q\rtimes\mathbb{F}_q^\times$ for some prime power $q=p^n$, and the non-symmetrically braided fusion categories with the fusion rules of $\mathrm{Rep}(S_3)$ and $\mathrm{Rep}(A_4)$.  Along with the rank 2 braided fusion categories with irrational global dimension \cite{ostrik}, this is a complete classification of braided fusion categories with a unique non-invertible simple object \cite[Theorem III.4.6]{thornton2012generalized}.  In \cite{MR4195420}, an infinite family of nilpotent braided generalized near-group categories was constructed containing cyclic extensions of braided pointed fusion categories of rank 2.  These were named $N$-Ising braided fusion categories and represent a large class of slightly degenerate braided generalized near-group fusion categories \cite[Theorem 5.8]{MR4167662}.  This construction is readily generalizable (Example \ref{ty}) to any Tambara-Yamagami braided fusion category.  As a complement to the characterization of this family in terms of equivariantization \cite[Theorem 6.3]{MR4167662}, we prove in Proposition \ref{elem} that any nilpotent braided generalized near-group category is a graded extension of a braided pointed fusion category on an elementary abelian 2-group.  These categories have been called braided generalized Tambara-Yamagami categories \cite[Section 5]{MR3071133}\cite[Section 6]{MR4167662}, and we conjecture on their fusion rules (Conjecture \ref{con}) to stimulate future research.

%%%%%%%%%%%%%%%%%%%%%%%

\section{Preliminaries}\label{sec:def1}

Our main objects of study are fusion rings and their categorifications, known as fusion categories.  The following definitions and concepts are standard, and can be found in a textbook such as \cite{tcat} which we will cite often.

\subsection{Fusion rings}

Fusion rings are a natural generalization of integral group rings $\mathbb{Z}G$ for finite groups $G$, where invertibility has been replaced with a suitable notion of duality.  Although interesting algebraic objects themselves, their study is vital as they describe many of the combinatorial and number-theoretical aspects of fusion categories (Section \ref{sec:def2}).

\paragraph{Definition.}  A unital $\mathbb{Z}_{\geq0}$-ring $(R,B)$ is a pairing of an associative unital ring $R$ which is free as a $\mathbb{Z}$-module, and a distinguished basis $B=\{b_i:i\in I\}$ including the unit $b_0:=1_R$ such that for all $i,j\in I$, $b_ib_j=\sum_{k\in I}c_{i,j}^kb_k$ with $c_{i,j}^k\in\mathbb{Z}_{\geq0}$.  Such a ring is based if there exists an involution of $I$, denoted $i\mapsto i^\ast$, such that the induced map defined by
\begin{equation}
a=\sum_{i\in I}a_ib_i\mapsto a^\ast=\sum_{i\in I}a_ib_{i^\ast},\,\,\, a_i\in\mathbb{Z}
\end{equation}
is an anti-involution of $R$, and $c_{i,j}^0=1$ if $i=j^\ast$ and $c_{i,j}^0=0$ otherwise.  We say that $x,y\in R$ are dual to one another when $x=y^\ast$ and $x$ is self-dual when $x=x^\ast$.  A \emph{fusion ring} is a unital based $\mathbb{Z}_{\geq0}$-ring $(R,B)$ such that $|B|$ is finite.

\begin{example}[Near-group fusion rings]\label{near}
Let $G$ be a finite group and $\ell\in\mathbb{Z}_{\geq0}$.  The \emph{near-group} fusion ring $R(G,\ell)$ has basis $B=\{g:g\in G\}\cup\{\rho\}$ whose fusion rules are the group operation for elements of $G$, $g\rho=\rho g=\rho$ for all $g\in G$, and
\begin{equation}
\rho^2=\ell\rho+\sum_{g\in G}g.
\end{equation}
Examples of near-group fusion rings include the character rings of extra-special 2-groups, or more generally the character ring of any finite group with a unique irreducible character of degree greater than 1 \cite{MR222160}.
\end{example}

\paragraph{Gradings and Frobenius-Perron dimension} Let $(R,B)$ be a fusion ring.  We say that $R$ is graded by a finite group $K$ or that $R$ is $K$-graded if there is a partition $B=\cup_{g\in K}B_g$ into disjoint subsets $B_g\subset B$ such that for all $x\in B_g$ and $y\in B_h$, $xy$ is a $\mathbb{Z}_{\geq0}$-linear combination of elements of $B_{gh}$.  We denote the $\mathbb{Z}$-linear span of each $B_g$ by $R_g$ and refer to these sets as components of the $K$-grading.  A $K$-grading of a fusion ring $(R,B)$ is faithful if $B_g$ is non-empty for all $g\in K$.  Every fusion ring $(R,B)$ has a distinguished universal grading $U(R)$ which is faithful, and satisfies the universal property that any other faithful grading of $R$ by a finite group $K$ is determined by a surjective group homomorphism $U(R)\to K$ \cite[Corollary 3.6.6]{tcat}.  For example, every fusion ring has a canonical dimensional grading \cite[Section 1.2]{2019arXiv191212260G} by an elementary abelian $2$-group.  The dimensional grading was known previously in the specific case of fusion rings with $\mathrm{FPdim}(R)\in\mathbb{Z}$, in which case $\mathrm{FPdim}(x)^2\in\mathbb{Z}$ for all $x\in B$.  The trivial component $R_e$ of any grading is distinguished in that it forms a fusion subring of $R$, generated by $xx^\ast$ for $x\in B$.  We denote the trivial component of the universal grading by $R_\mathrm{ad}$ and refer to it as the adjoint subring.

\par Every fusion ring $(R,B)$ has a unique character $\mathrm{FPdim}:R\to\mathbb{C}$ with the property that $\mathrm{FPdim}(x)>0$ for all $x\in B$ \cite[Proposition 3.3.6(3)]{tcat}.  These Frobenius-Perron dimensions $\mathrm{FPdim}(x)$ for $x\in B$ can be computed as the maximal eigenvalues of the matrices of (left) multiplication by $x$, and then one can extend this function linearly to all of $R$.   If $\mathrm{FPdim}(x)=1$ for $x\in R$, then $x\in B$ and we call $x$ invertible.  The invertible elements of $R$ form a finite group $G_R$ which acts on $B$ by permutation.  If $G_R=R$, we say $R$ is pointed.

\begin{example}[Generalized near-group fusion rings]\label{gen}
When $B$ has two orbits under the action of $G_R$, which we will denote $|B/G_R|=2$, then there exists $d\in\mathbb{R}_{>1}$ such that $\mathrm{FPdim}(x)\in\{1,d\}$ for all $x\in B$.  Number-theoretic aspects of these \emph{generalized near-group} fusion rings are simplified since if $\mathrm{FPdim}(x)=d$ for some $b\in B$, then
\begin{equation}\label{eq:1}
d^2-rd-|H_R|=0
\end{equation}

where $H_R\trianglelefteq G_R$ is the normal subgroup stabilizing any basis element of Frobenius-Perron dimension $d$ \cite[Lemma 2.8(e)]{220807319}, and $r\in\mathbb{Z}_{\geq0}$ is the multiplicity of basis elements of Frobenius-Perron dimension $d$ appearing as summands of $xx^\ast$.  If $R$ is any near-group fusion ring (Example \ref{near}) and $K$ any finite group, then $R\times\mathbb{Z}K$ is a generalized near-group fusion ring.  There are multitudes of examples which are not of this factored form (Example \ref{ty}).
\end{example}

\par The following lemma was first used in \cite[Chapter IV]{thornton2012generalized} as a statement about categories, but we include its proof here for completeness, and as more general statement about generalized near-group fusion rings.

\begin{lemma}\label{bigone}
Let $(R,B)$ be a fusion ring with $|B/G_R|=2$.  Then $(R_\mathrm{ad})_\mathrm{ad}=R_\mathrm{ad}$ or $(R_\mathrm{ad})_\mathrm{ad}$ is trivial.
\end{lemma}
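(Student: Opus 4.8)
The plan is to split on whether the integer $r$ of equation \eqref{eq:1} is zero; this is exactly what controls whether $R_\mathrm{ad}$ contains a noninvertible basis element. First I would record the shape of the generators of $R_\mathrm{ad}$. As $R_\mathrm{ad}$ is generated as a fusion subring by the products $xx^\ast$ with $x\in B$, and $gg^\ast=1$ for invertible $g$, the only nontrivial generators are the $\rho\rho^\ast$ with $\mathrm{FPdim}(\rho)=d$. The multiplicity of an invertible $g$ in $\rho\rho^\ast$ equals the multiplicity of $\rho$ in $g\rho$, which is $1$ if $g\in H_R$ and $0$ otherwise; hence the invertible part of $\rho\rho^\ast$ is $\sum_{h\in H_R}h$, and by \eqref{eq:1} the noninvertible part has total Frobenius--Perron dimension $rd$. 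If $r=0$, then (using that conjugation by any $g\in G_R$ fixes the normal subgroup $H_R$ setwise) every generator equals $\sum_{h\in H_R}h$, so $R_\mathrm{ad}=\mathbb{Z}H_R$ is pointed; for a pointed ring every $yy^\ast=1$, so $(R_\mathrm{ad})_\mathrm{ad}$ is trivial. This settles the first alternative.

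Assume now $r>0$, so that $R_\mathrm{ad}$ contains a noninvertible element. The next step is to understand $R_\mathrm{ad}$ as a fusion ring in its own right. Let $P\trianglelefteq G_R$ be the subgroup of invertibles lying in $R_\mathrm{ad}=R_e$; this is the kernel of $G_R\to U(R)$, and it contains $H_R$ because each $\sum_{h\in H_R}h$ is a summand of a generator. Since $U(R)$ is abelian and generated by the degrees of the basis elements, a short computation with the grading shows that the noninvertibles of $R_\mathrm{ad}$ are precisely the degree-$e$ elements $g\rho$, that there are $[P:H_R]$ of them, and that $P/H_R$ permutes them simply transitively. Thus $R_\mathrm{ad}$ is again a two-orbit fusion ring, now with invertible group $P$, the same value of $d$, and---comparing the dimension of the noninvertible part of $\sigma\sigma^\ast$---the same multiplicity $r>0$.

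Granting this structure, I would finish in two moves. Because $r>0$, for a noninvertible $\sigma\in R_\mathrm{ad}$ the product $\sigma\sigma^\ast$ has a noninvertible summand, so $(R_\mathrm{ad})_\mathrm{ad}$ contains at least one element $\sigma_0$ of the orbit $N$ of noninvertibles of $R_\mathrm{ad}$. If in addition $(R_\mathrm{ad})_\mathrm{ad}$ contains all of $P$, then closure under multiplication gives $P\sigma_0=N\subseteq(R_\mathrm{ad})_\mathrm{ad}$ by simple transitivity, so $(R_\mathrm{ad})_\mathrm{ad}$ contains the whole basis $P\cup N$ and equals $R_\mathrm{ad}$, as desired. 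The hard part, and the step I expect to be the main obstacle, is to show that $(R_\mathrm{ad})_\mathrm{ad}$ contains every element of $P$. Each $p\in P$ realizes a transition $p\sigma=\sigma'$ between noninvertibles of $R_\mathrm{ad}$ and therefore appears in the product $\sigma'\sigma^\ast$, so $p\in(R_\mathrm{ad})_\mathrm{ad}$ as soon as both $\sigma,\sigma'\in(R_\mathrm{ad})_\mathrm{ad}$. This reduces the whole claim to the connectivity statement that, beginning from $\sigma_0$ and repeatedly passing to the noninvertible summands of products $\sigma\sigma^\ast$, one reaches all of $N$---equivalently, that $R_\mathrm{ad}$ admits no nontrivial grading. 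Here I would invoke the explicit form of the products $\rho_i\rho_j^\ast$ for two-orbit fusion rings established in \cite{220807319}: their rigidity should force the noninvertible summands of a single $\sigma\sigma^\ast$ to generate the entire orbit $N$, which closes the argument.
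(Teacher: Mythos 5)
Your $r=0$ case is correct and matches the paper's second alternative. The problem is exactly the step you flag as the main obstacle: the ``connectivity'' claim in the $r>0$ case is left as a hope (``their rigidity should force\dots''), and nothing you establish earlier delivers it. Knowing only Equation (\ref{eq:1}) and the invertible part $\sum_{h\in H_R}h$ of each $\rho\rho^\ast$, it is not excluded a priori that the noninvertible summands of $\sigma\sigma^\ast$ lie in a proper subset of $N$ that is stable under your iteration --- i.e.\ that $(R_\mathrm{ad})_\mathrm{ad}$ is a proper, nontrivial subring. Ruling that out is precisely the content of the result you needed to quote: \cite[Lemma 2.8(d)]{220807319}, which says that $xx^\ast=yy^\ast$ \emph{as elements of $R$} for all noninvertible $x,y\in B$ --- a much stronger statement than equality of dimensions or of invertible parts, and it is the entire engine of the paper's proof.

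Once you have that lemma, your scaffolding (the subgroup $P$, simple transitivity of $P/H_R$, the two-orbit structure of $R_\mathrm{ad}$, the reduction to reaching all of $N$) becomes unnecessary. If $\sigma_0$ is a noninvertible basis element of $R_\mathrm{ad}$, it is in particular a noninvertible basis element of $B$, so every generator $xx^\ast$ of $R_\mathrm{ad}$ with $x$ noninvertible equals $\sigma_0\sigma_0^\ast$, while invertible $x$ contribute only $1$. Hence $R_\mathrm{ad}$ is the fusion subring generated by the summands of the single product $\sigma_0\sigma_0^\ast$; all of these lie in $(R_\mathrm{ad})_\mathrm{ad}$ by definition, since $\sigma_0$ is a basis element of $R_\mathrm{ad}$. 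Therefore $R_\mathrm{ad}\subseteq(R_\mathrm{ad})_\mathrm{ad}$, giving equality. (Your unproven intermediate claims in the $r>0$ case --- same $d$, same $r$, simple transitivity --- also rest on Lemma 2.8 of the same paper; with it they become true, but also superfluous.)
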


\begin{proof}
Recall that $R_\mathrm{ad}$ is generated by the basis elements appearing as summands of $xx^\ast$ across all $x\in B$ \cite[Definition 3.6.1]{tcat}, but since $xx^\ast=yy^\ast$ for all noninvertible $x,y\in B$ \cite[Lemma 2.8(d)]{220807319}, if there exists noninvertible $x\in R_\mathrm{ad}$, then $R_\mathrm{ad}\subset (R_\mathrm{ad})_\mathrm{ad}$ and moreover $(R_\mathrm{ad})_\mathrm{ad}=R_\mathrm{ad}$.  Otherwise all basis elements of $R_\mathrm{ad}$ are invertible, and thus $(R_\mathrm{ad})_\mathrm{ad}$ is trivial. 
\end{proof}

Lemma \ref{bigone} demonstrates that there is a very natural stratification of generalized near-group fusion rings into those which are nilpotent in the sense of \cite{nilgelaki}, and those which are extensions of self-adjoint generalized near-group fusion rings by finite groups.

\begin{example}[Nilpotent generalized near-group fusion rings]\label{ty}
\par The following nilpotent fusion rings are generalized from those defined in \cite{MR4195420}.  Let $m,n\in\mathbb{Z}_{\geq1}$.  Denote by $R(m,n)\subset\mathbb{Z}C_{2^m}\times R(C_2^n,0)$ the fusion subring generated by $g\times\rho$ where $g$ is any generator of $C_{2^m}$ and $\rho$ is the unique noninvertible basis element of $R(C_2^n,0)$ (see Example \ref{gen}).  The $2^{m-1}$ noninvertible basis elements of $R(m,n)$ are $g^j\times\rho$ for all odd $j\in\mathbb{Z}$, and the $2^{m+n-1}$ invertible elements are $g^j\times h$ for all $h\in C_2^n$ and even $j\in\mathbb{Z}$.  In particular, $R(m,n)_\mathrm{pt}=\mathbb{Z}(C_{2^{m-1}}\times C_2^n)$.  We later conjecture (Conjecture \ref{con}) that up to products with pointed fusion rings, any braided generalized Tambara-Yamagami fusion category has fusion rules $R(m,n)$ for some $m,n\in\mathbb{Z}_{\geq1}$. 
\end{example}

%%%%%%%%%%%%%%%%%%%%%%%%%%%

\subsection{Fusion categories and braidings}\label{sec:def2}

Fusion categories are the categorical analog of fusion rings.  As such, the language and notation used in Section \ref{sec:def1} carry over naturally.  In this section we will focus on the categorical aspects that allow a proof of Theorem \ref{thm:irr}.

\paragraph{Definitions.}  \emph{Fusion categories} (over $\mathbb{C}$) are $\mathbb{C}$-linear semisimple rigid monoidal categories $\mathcal{C}$ (with product $\otimes$, monoidal unit $\mathbbm{1}_\mathcal{C}$, and duality $X\mapsto X^\ast$) whose set of isomorphism classes of simple objects, $\mathcal{O}(\mathcal{C})$, is finite and includes $\mathbbm{1}_\mathcal{C}$.  These assumptions ensure the Grothendieck ring of $\mathcal{C}$ is a fusion ring with basis $\mathcal{O}(\mathcal{C})$.  All of the structure morphisms in a fusion category are canonical \cite[Chapter 2]{tcat} except the associators, i.e.\ the natural isomorphisms $\alpha_{X,Y,Z}:(X\otimes Y)\otimes Z\to X\otimes(Y\otimes Z)$ for all $X,Y,Z\in\mathcal{O}(\mathcal{C})$ which must satisfy the ``pentagon axioms'' \cite[Definition 2.1.1]{tcat}.  In particular, the tensor unit $\mathbbm{1}_\mathcal{C}$ and its structure morphisms are unique up to a unique isomorphism \cite[Proposition 2.2.6]{tcat}, and the same is true for $X^\ast$ for any $X\in\mathcal{C}$ \cite[Proposition 2.10.5]{tcat}.  It is therefore productive and equivalent to think of fusion categories as fusion rings equipped with solutions to the system of algebraic equations dictated by the pentagon axioms.  With this view, it is clear that for each fusion category $\mathcal{C}$ and $\sigma\in\mathrm{Gal}(\overline{\mathbb{Q}}/\mathbb{Q})$, where $\overline{\mathbb{Q}}$ is the algebraic closure of $\mathbb{Q}$, there exists a Galois conjugate fusion category $\mathcal{C}^\sigma$ whose associators are defined by applying $\sigma$ to the solutions of the pentagon equations for $\mathcal{C}$ \cite[Sections 4.2--4.3]{https://doi.org/10.48550/arxiv.1305.2229}.

\begin{example}[Irrational generalized near-group fusion categories]\label{yurrr}  Recall from Example \ref{gen} that the unique nontrivial Frobenius-Perron dimension $d$ of basis elements in a generalized near-group fusion ring $R$, satisfies $d^2-rd-|H_R|=0$ for some $r\in\mathbb{Z}_{\geq0}$.    There is no restriction on the $r$ which can occur on the level of fusion rings, but it was shown in \cite[Proposition 3.3]{220807319} that if $R$ is the Grothendieck ring of a fusion category and $d\not\in\mathbb{Z}$, then $|H_R|$ divides $r$, i.e.\ there exists $k\in\mathbb{Z}_{\geq0}$ such that $d^2-k|H_R|d-|H_R|=0$.  As a result, $\mathrm{FPdim}(R)=|G_R|(2+kd)$.  This fact will be used in the proofs of Lemmas \ref{tan}--\ref{nottan}.
\end{example}

\par One of the beautiful aspects of fusion categories is that they can be structurally commutative in a variety of ways.  Rather than having $xy=yx$ for every $x,y$ in a fusion ring $R$, if one has natural isomorphisms $\tau_{X,Y}:X\otimes Y\to Y\otimes X$ for all $X,Y$ satisfying compatibilities \cite[Definition 8.1.1]{tcat} with the associative isomorphisms in a fusion category $\mathcal{C}$, we say that $\mathcal{C}$ equipped with these commutator isomorphisms forms a \emph{braided fusion category}.  Paramount to the study of braided fusion categories is the notion of centralizer fusion subcategories.  Given a fusion subcategory $\mathcal{D}$ of a braided fusion category $\mathcal{C}$, the centralizer of $\mathcal{D}$ in $\mathcal{C}$, denoted $C_\mathcal{C}(\mathcal{D})$ is the full fusion subcategory of $\mathcal{C}$ whose objects are
\begin{equation}
\{X\in\mathcal{C}:\tau_{Y,X}\tau_{X,Y}=\mathrm{id}_{X\otimes Y}\text{ for all }Y\in\mathcal{C}\}.
\end{equation}
At one extreme, we note $C_\mathcal{C}(\mathbbm{1})=\mathcal{C}$ where we briefly abuse notation by using $\mathbbm{1}$ to refer to the fusion subcategory generated by the tensor unit.  The opposite extreme, $C_\mathcal{C}(\mathcal{C})$, is known as the symmetric center of $\mathcal{C}$.  The symmetric center is one of the defining characteristics of braided fusion categories.  In particular, when $C_\mathcal{C}(\mathcal{C})=\mathbbm{1}$ we refer to $\mathcal{C}$ as nondegenerately braided.

\begin{example}[Symmetrically braided and Tannakian fusion categories]\label{ex:sym}  If $\mathcal{C}$ is a braided fusion category and $C_\mathcal{C}(\mathcal{C})=\mathcal{C}$, then we say $\mathcal{C}$ is \emph{symmetrically braided}.  It has been shown \cite[Theorem 9.9.22]{tcat} that any symmetrically braided fusion category is braided equivalent to a braided fusion category $\mathrm{Rep}(G,z)$ for a finite group $G$ and involution $z\in G$, which is equivalent to $\mathrm{Rep}(G)$ as a fusion category, with the braiding twisted by $z$ \cite[Example 9.9.1(3)]{tcat}.  In particular, when a symmetrically braided fusion category $\mathcal{C}$ is braided equivalent to $\mathrm{Rep}(G,z)$ with $z$ trivial, then $\mathcal{D}\simeq\mathrm{Rep}(G)$ and we say $\mathcal{D}$ is  \emph{Tannakian}.  Otherwise $\mathcal{D}$ is \emph{super-Tannakian}.  Every symmetrically braided fusion category $\mathcal{D}$ has a canonical $C_2$-grading (possibly non-faithful) whose trivial component is a Tannakian fusion subcategory \cite[Corollary 9.9.32]{tcat}.
\end{example}

One final structure that will be needed in what follows is based on the relation between the Frobenius-Perron and global dimensions \cite[Section 2.2]{ENO} of a fusion category.  In particular, any spherical structure \cite[Definition 4.7.14]{tcat} on a fusion category $\mathcal{C}$ provides a categorical dimension character, denoted $X\mapsto\dim(X)$ for $X\in\mathcal{C}$, of the Grothendieck ring of $\mathcal{C}$ which may or may not coincide with the Frobenius-Perron dimension character (see Section \ref{sec:def1}).  When $\mathrm{FPdim}(\mathcal{C})=\dim(\mathcal{C}):=\sum_{X\in\mathcal{O}(\mathcal{C})}\dim(X)^2$, we say $\mathcal{C}$ is pseudounitary, and in this case there exists a canonical spherical structure so that $\dim=\mathrm{FPdim}$ as characters \cite[Proposition 9.5.1]{tcat}.  Pragmatically, this allows us to never mention a specific spherical structure and allows results about categorical and Frobenius-Perron dimensions to be used interchangeably.  By \cite[Lemma 3.1]{220807319}, all of the categories considered in Sections \ref{sec:yurrr}--\ref{sec:nurrr} are Galois conjugate to pseudounitary fusion categories.

\paragraph{De-equivariantization.} The following construction can be considered for arbitrary fusion categories \cite[Sections 4.15 \& 8.23]{tcat}, but we will describe it only in the context of braided fusion categories.  Given a braided fusion category $\mathcal{C}$, and a Tannakian fusion subcategory $\mathrm{Rep}(G)\simeq\mathcal{D}\subset\mathcal{C}$, one may consider the de-equivariantization category $\mathcal{C}_G$ \cite[Section 8.23]{tcat}.  The de-equivariantization $\mathcal{C}_G$ is a fusion category, and is braided when $\mathcal{D}\subset C_\mathcal{C}(\mathcal{C})$ \cite[Proposition 8.23.8]{tcat}.  On the level of global dimension, $\dim(\mathcal{C}_G)|G|=\dim(\mathcal{C})$ \cite[Proposition 4.26]{DGNO}.  The process of de-equivariantization is analogous to quotienting a finite group by a normal subgroup.  Precisely, using the notation of \cite[Section 8.4]{tcat}, if $\mathcal{C}(G,q)$ is a pre-metric group and $H\subset G$ a subgroup, then $\mathcal{C}(G,q)_H\simeq\mathcal{C}(G/H,\tilde{q})$ for a particular quadratic form $\tilde{q}$ on $G/H$ \cite[Example 8.23.10]{tcat}.  This construction on fusion categories is useful for the same reason normal subgroups are useful in the study of finite groups: the quotienting process reduces arguments to understanding small examples and how the original examples can be recovered from attempting to reverse the quotienting process.  Equivariantization, denoted $\mathcal{C}^G$, of a braided fusion category $\mathcal{C}$ by a finite group $G$ is the reverse of de-equivariantization in the sense that there are canonical equivalences $(\mathcal{C}^G)_G\simeq\mathcal{C}$ and $(\mathcal{C}_G)^G\simeq\mathcal{C}$ \cite[Section 4.2]{DGNO}.

\par Another effective way of thinking about de-equivariantization is in terms of modules over commutative algebra objects in braided fusion categories \cite[Section 8.8]{tcat}.  We will not expound on this viewpoint in detail other than to make an observation about dimensions of simple objects in de-equivariantizations of braided fusion categories.  In particular, every Tannakian fusion category $\mathrm{Rep}(G)$ has a canonical commutative algebra object $A$: the algebra of functions on $G$ \cite[Example 8.8.9]{tcat}.  If $\mathcal{C}$ is a braided fusion category containing a subcategory braided equivalent to $\mathrm{Rep}(G)$, then the de-equivariantization $\mathcal{C}_G$ is the category of (left) $A$-module objects in $\mathcal{C}$ \cite[Example 3.8]{DMNO}.  The tensor identity in $\mathcal{C}_G$ is the algebra object $A$ itself, and all other simple $A$-modules are summands of the free $A$-modules $A\otimes X$ for $X\in\mathcal{C}$.  The free $A$-modules $A\otimes X$ when $X$ is invertible are all simple by \cite[Lemma 3.2]{Ostrik2003} and invertible, and when $X$ is not invertible, any simple summand $Y$ of the free module $A\otimes X$ has a dimension which is a rational multiple of $\dim(X)$ \cite[Theorem 1.18]{KiO}.

%%%%%%%%%%%%%%%%%%%%%%%%%%%

\section{Irrational generalized near-groups}\label{sec:yurrr}

%%%%%%%%%%%%%%%%%%%%%%%%%%

If $\mathcal{C}$ is a generalized near-group fusion category and $\mathrm{FPdim}(\mathcal{C})\not\in\mathbb{Z}$, then $\mathcal{C}$ is not nilpotent \cite[Remark 4.7(6)]{nilgelaki}.  In this case Lemma \ref{bigone} implies that $\mathcal{C}$ is an extension of a generalized near-group fusion category with $\mathcal{C}=\mathcal{C}_\mathrm{ad}$, and so this will be a natural assumption moving forward.  Assume further that $\mathcal{C}$ is braided.  Then $C_\mathcal{C}(\mathcal{C})=\mathcal{C}_\mathrm{pt}$ is symmetrically braided from \cite[Proposition 3.13]{MR4167662} and the assumption $\mathcal{C}=\mathcal{C}_\mathrm{ad}$.   Symmetrically braided fusion categories are Tannakian or super-Tannakian (see Example \ref{ex:sym}), i.e.\ they possess a maximal Tannakian fusion subcategory with one-half the dimension of the original \cite[Corollary 9.9.32]{tcat}, so we proceed by these cases.  The end result is Theorem \ref{thm:irr} which states that there are exactly two generalized near-group braided fusion categories up to Grothendieck equivalence and taking products with braided pointed fusion categories.  The braided categorifications of these fusion rules have been previously classified as cited in the following example.

\begin{example}\label{lie}
A large number of braided fusion categories with irrational global dimension spawn from the representation theory of quantum groups at roots of unity.  The general construction has a long circuitous history, but the combinatorics and numerics of these categories are very tractable to anyone with a modest understanding of Lie theory \cite{MR4079742}.  To each complex finite-dimensional simple Lie algebra $\mathfrak{g}$ and root of unit $q$ such that $q^2$ is a primitive $\ell$th root of unity with $\ell\in\mathbb{Z}$ which is greater than or equal to the dual Coxeter number of $\mathfrak{g}$, there exists a spherical braided fusion category $\mathcal{C}(\mathfrak{g},\ell,q)$.  For our purposes, we will only consider the adjoint subcategories $\mathcal{C}(\mathfrak{sl}_2,\ell,q)_\mathrm{ad}$ for $\ell\geq5$.  It is well-known \cite{MR1237835} that any braided fusion category with the fusion rules of $\mathcal{C}(\mathfrak{sl}_2,\ell,q)_\mathrm{ad}$ for some $\ell$ and $q$ is furthermore braided equivalent to $\mathcal{C}(\mathfrak{sl}_2,\ell,q)_\mathrm{ad}$ for some choice of $\ell$ and $q$.  For example, all rank 2 braided fusion categories with irrational global dimension \cite[Section 2.5]{ostrik} are braided equivalent to $\mathcal{C}(\mathfrak{sl}_2,5,q)_\mathrm{ad}$ for one of the four choices of primitive 10th roots of unity $q$.
\end{example}

\begin{lemma}\label{tan}
Let $\mathcal{C}$ be a braided generalized near-group fusion category such that $\mathcal{C}=\mathcal{C}_\mathrm{ad}$ and $\mathrm{FPdim}(\mathcal{C})\not\in\mathbb{Z}$.  If $\mathcal{C}_\mathrm{pt}$ is Tannakian, then $\mathcal{C}$ is equivalent to $\mathcal{C}(\mathfrak{sl}_2,5,q)_\mathrm{ad}$ where $q$ is a primitive $10$th root of unity.
\end{lemma}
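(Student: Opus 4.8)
The plan is to reduce to the rank-two classification recorded in Example \ref{lie} by de-equivariantizing the entire pointed part. Write $\mathcal{C}_{\mathrm{pt}} = C_\mathcal{C}(\mathcal{C}) \simeq \mathrm{Rep}(G)$ for the finite group $G$ afforded by the Tannakian hypothesis, so that $\mathrm{FPdim}(\mathrm{Rep}(G)) = |G| = |G_R|$. Since $\mathrm{Rep}(G)$ sits inside the symmetric center, the de-equivariantization $\mathcal{C}_G$ is again braided, and its symmetric center is $(\mathrm{Rep}(G))_G \simeq \mathbbm{1}$, so $\mathcal{C}_G$ is nondegenerately braided. Using $\mathrm{FPdim}(\mathcal{C}_G)|G| = \mathrm{FPdim}(\mathcal{C})$ from \cite[Proposition 4.26]{DGNO} together with $\mathrm{FPdim}(\mathcal{C}) = |G_R|(2+kd)$ from Example \ref{yurrr}, I get $\mathrm{FPdim}(\mathcal{C}_G) = 2+kd$. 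Here $k \ge 1$, since $k=0$ would give $d^2 = |H_R| \in \mathbb{Z}$ and hence $\mathrm{FPdim}(\mathcal{C}) = 2|G_R| \in \mathbb{Z}$, contrary to hypothesis; in particular $\mathrm{FPdim}(\mathcal{C}_G)$ is irrational.

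Next I would pin down the simple objects of $\mathcal{C}_G = A\text{-mod}$, where $A$ is the function algebra on $G$. Every simple $A$-module is a summand of a free module $A \otimes X$ with $X \in \mathcal{O}(\mathcal{C})$. For invertible $X$ one has $A \otimes X \simeq A = \mathbbm{1}_{\mathcal{C}_G}$, because de-equivariantizing $\mathrm{Rep}(G) = \mathcal{C}_{\mathrm{pt}}$ by $G$ collapses it to $\mathbbm{1}$; and for a noninvertible simple $\rho$ the relation $A\otimes g \simeq A$ gives $A\otimes(g\rho) \simeq A\otimes\rho$ for every $g \in G_R$. Thus all nontrivial simples of $\mathcal{C}_G$ are summands of the single free module $A\otimes\rho$, each of dimension a rational multiple of $d$ by \cite[Theorem 1.18]{KiO} and in particular noninvertible. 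Writing $A\otimes\rho = \bigoplus_j m_j Y_j$ into distinct simples, Frobenius reciprocity gives $\sum_j m_j^2 = \dim\mathrm{End}_{\mathcal{C}_G}(A\otimes\rho) = \dim\mathrm{Hom}_\mathcal{C}(\rho, \bigoplus_{g\in G_R} g\rho) = |H_R|$, since $g\rho \simeq \rho$ exactly when $g$ stabilizes $\rho$. Moreover $\sum_j m_j\,\mathrm{FPdim}(Y_j) = \mathrm{FPdim}(A\otimes\rho) = |G_R|\,d$, while the global dimension count yields $\sum_j \mathrm{FPdim}(Y_j)^2 = \mathrm{FPdim}(\mathcal{C}_G) - 1 = 1 + kd = d^2/|H_R|$, using $d^2 = k|H_R|d + |H_R|$.

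The crux is then a Cauchy--Schwarz estimate: $|G_R|\,d = \sum_j m_j\,\mathrm{FPdim}(Y_j) \le \bigl(\sum_j m_j^2\bigr)^{1/2}\bigl(\sum_j \mathrm{FPdim}(Y_j)^2\bigr)^{1/2} = |H_R|^{1/2}\,(d^2/|H_R|)^{1/2} = d$, forcing $|G_R| = 1$. Hence $\mathcal{C}_{\mathrm{pt}}$ is trivial, $\mathcal{C} = \mathcal{C}_G$ is nondegenerate, and the single $G_R$-orbit of noninvertibles consists of one simple object of dimension $d$ with $d^2 = kd+1$; that is, $\mathcal{C}$ has rank $2$ and irrational global dimension. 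Example \ref{lie} then identifies $\mathcal{C}$ with $\mathcal{C}(\mathfrak{sl}_2,5,q)_\mathrm{ad}$ for a primitive $10$th root of unity $q$ (such fusion rules being categorifiable only for $k=1$). I expect the main obstacle to be the orbit--stabilizer bookkeeping producing the two identities $\sum_j m_j^2 = |H_R|$ and $\sum_j \mathrm{FPdim}(Y_j)^2 = d^2/|H_R|$ in a form where the Cauchy--Schwarz equality is precisely the statement $|G_R|=1$; once these are in hand the conclusion is immediate.
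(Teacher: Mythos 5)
Your first step (de-equivariantizing by the Tannakian symmetric center $\mathcal{C}_{\mathrm{pt}}\simeq\mathrm{Rep}(G)$, getting a nondegenerate $\mathcal{C}_G$ with $\mathrm{FPdim}(\mathcal{C}_G)=2+kd$) matches the paper, and two of your three counting identities are correct: $\sum_j m_j^2=|H_R|$ by Frobenius reciprocity, and $\sum_j \mathrm{FPdim}(Y_j)^2=\mathrm{FPdim}(\mathcal{C}_G)-1=1+kd=d^2/|H_R|$, where $\mathrm{FPdim}(Y_j)$ denotes dimension \emph{in} $\mathcal{C}_G$. The gap is the third identity. The free-module functor $F(X)=A\otimes X$ is a tensor functor $\mathcal{C}\to\mathcal{C}_G$, and tensor functors preserve Frobenius--Perron dimensions \cite[Proposition 4.5.7]{tcat}; equivalently, in the normalization of \cite[Theorem 1.18]{KiO} one has $\dim_{\mathcal{C}_G}(M)=\dim_{\mathcal{C}}(M)/\dim_{\mathcal{C}}(A)$. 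So the free module $A\otimes\rho$ has $\mathrm{FPdim}_{\mathcal{C}_G}(A\otimes\rho)=d$, not $|G_R|d$; the value $|G_R|d$ is the dimension of the \emph{underlying object of} $\mathcal{C}$, which lives in the wrong normalization to be compared with your sum-of-squares identity. With consistent normalization the correct identity is $\sum_j m_j\,\mathrm{FPdim}(Y_j)=d$, and then Cauchy--Schwarz reads $d\le\sqrt{|H_R|}\cdot\sqrt{d^2/|H_R|}=d$: an equality, automatically, for \emph{every} value of $|G_R|$. (If you instead measure everything by underlying objects, the sum of squares becomes $|G_R|^2(1+kd)$ and the inequality is again an identity.) So the crux step is vacuous: the apparent contradiction forcing $|G_R|=1$ is exactly your off-by-$|G_R|$ normalization error. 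What equality in Cauchy--Schwarz actually yields is $\mathrm{FPdim}(Y_j)=m_jd/|H_R|$, which excludes nothing: for instance $|H_R|=4$ with a single $Y$ of multiplicity $m=2$ and $\dim Y=d/2$ satisfies all three identities, so counting alone cannot rule out $|G_R|>1$.

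This is precisely why the paper, after the same de-equivariantization, reaches for an arithmetic tool rather than a combinatorial one: it applies a Galois automorphism $\sigma$ nontrivial on $\mathbb{Q}(d)$ and invokes the lower bound on Galois conjugates of global dimensions of fusion categories \cite[Theorem 1.1.2]{MR3943751}, computing $\dim((\mathcal{C}_G)^\sigma)=1-\sigma(d)/d$ and concluding $1/3<-\sigma(d)/d=(1+kd)^{-1}$; this forces $k=1$, then $d<2$, then $|H_R|=1$. The rank-2 conclusion is obtained afterwards by a short fusion-rule argument inside $\mathcal{C}$ itself ($X\otimes X^\ast\cong\mathbbm{1}\oplus Y$ with $Y$ self-dual and $\otimes$-generating, so $\mathcal{C}=\mathcal{C}_{\mathrm{ad}}$ has rank $2$), at which point Example \ref{lie} applies. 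To repair your proof you would need to replace the Cauchy--Schwarz step with input of this kind (a Galois/algebraic-integrality bound, or categorification theorems applied to $\mathcal{C}_G$); the three identities you derive, correctly normalized, are consistent with arbitrarily large $G_R$.
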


\begin{proof}
Assume the Grothendieck ring of $\mathcal{C}$ is a generalized near-group fusion ring $(R,B)$ with group of invertible elements $G:=G_R$ and fixed-point subgroup $H_R=:H\trianglelefteq G$ (see Example \ref{gen}).  Without loss of generality \cite[Lemma 3.1]{220807319} we may assume $\mathcal{C}$ is equipped with the unique spherical structure such that $\dim(\mathcal{C})=\mathrm{FPdim}(\mathcal{C})=|G|(2+kd)$ where $k\in\mathbb{Z}_{>0}$ is such that every noninvertible $X\in\mathcal{O}(\mathcal{C})$ has $d:=\mathrm{FPdim}(X)=\dim(X)$ which is the largest root of $x^2-k|H|x-|H|$ (see Example \ref{yurrr}).  We have assumed $C_\mathcal{C}(\mathcal{C})=\mathcal{C}_\mathrm{pt}\simeq\mathrm{Rep}(G)$ is a braided equivalence for a finite group $G$, so we can consider the de-equivariantization $\mathcal{C}_G$ (see Section \ref{sec:def2}) which is a fusion category with $\dim(\mathcal{C}_G)=\mathrm{FPdim}(\mathcal{C})=2+kd$.  Let $\sigma\in\mathrm{Gal}(\overline{\mathbb{Q}}/\mathbb{Q})$ be any automorphism such that $\sigma|_{\mathbb{Q}(d)}\in\mathrm{Gal}(\mathbb{Q}(d)/\mathbb{Q})$ is nontrivial, which exists since $d\not\in\mathbb{Z}$.  Noting $d\sigma(d)=-|H|$, and $d+\sigma(d)=k|H|$,
\begin{equation}
\dim((\mathcal{C}_G)^\sigma)=2+k\sigma(d)=2-\frac{k|H|}{d}=2-(d+\sigma(d))d=1-\frac{\sigma(d)}{d}.
\end{equation}
Thus by \cite[Theorem 1.1.2]{MR3943751}, we require $
1/3<-\sigma(d)/d=|H|/d^2=(1+kd)^{-1}$.  As $d>1$, we must have $k=1$, thus $d=(1/2)(|H|+\sqrt{|H|(|H|+4)})<2$, which is only true for $|H|=1$.

\par The dimensions of simple objects in $\mathcal{C}$ are now determined as $1$ or $d=(1/2)(1+\sqrt{5})$.  Assume $X\in\mathcal{O}(\mathcal{C})$ with $\dim(X)=d$.  Then $X\otimes X^\ast\cong\mathbbm{1}\oplus Y$ for some noninvertible $Y\in\mathcal{O}(\mathcal{C})$ as $d^2=1+d$ is the unique decomposition of $d^2$ into a sum of $1$ and $d$ with nonnegative integer coefficients, which implies $Y\cong Y^\ast$.  But by \cite[Lemma 2.8(d)]{220807319}, $Y\otimes Y^\ast\cong X\otimes X^\ast\cong\mathbbm{1}\oplus Y$.  Therefore $Y$ $\otimes$-generates the category $\mathcal{C}_\mathrm{ad}=\mathcal{C}$ for which $|\mathcal{O}(\mathcal{C})|=2$, which are classified as in the statement of the lemma (see Example \ref{lie}).
\end{proof}

\par In the remainder of our proof, we will use numerics which arise from realizing $\mathcal{C}$ as a pseudounitary pre-modular category \cite[Section 8.13]{tcat}, i.e.\ a braided fusion category equipped with a spherical structure.  In particular, for all $X,Y\in\mathcal{O}(\mathcal{C})$ there exist pre-modular data $s_{X,Y},\theta_X\in\mathbb{C}$ which are related by the balancing equation \cite[Proposition 8.13.8]{tcat}
\begin{equation}\label{nein}
s_{X,Y}=\theta_X^{-1}\theta_Y^{-1}\sum_{Z\in\mathcal{O}(\mathcal{C})}N_{X,Y}^Z\theta_Z\dim(Z)
\end{equation}
where $N_{X,Y}^Z:=\dim_\mathbb{C}(\mathrm{Hom}_\mathcal{C}(X\otimes Y,Z))$ are the fusion rules of $\mathcal{C}$.  This is convenient since pseudounitarity implies that $X,Y\in\mathcal{O}(\mathcal{C})$ centralize one another ($\tau_{Y,X}\tau_{X,Y}=\mathrm{id}_{X\otimes Y}$) if and only if $s_{X,Y}=\dim(X)\dim(Y)$ \cite[Proposition 2.5]{mug1}.  For example $X,Y\in\mathcal{O}(\mathcal{C}_\mathrm{pt})$ centralize one another if and only if $s_{X,Y}=1$.

\par The following is a generalization of \cite[Proposition III.3.10]{thornton2012generalized} to braided generalized near-group fusion categories. 

\begin{lemma}\label{tanlem}
Let $\mathcal{C}$ be a braided generalized near-group fusion category with fixed-point subgroup $H$.  If $\mathcal{C}=\mathcal{C}_\mathrm{ad}$ and $\mathcal{C}$ is not symmetrically braided, then $\theta_h=1$ for all $h\in H$, i.e.\   the pointed fusion subcategory of $\mathcal{C}$ corresponding to $H$ is braided equivalent to $\mathrm{Rep}(H)$.
\end{lemma}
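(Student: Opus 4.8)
The plan is to pin down the single relevant entry of the $S$-matrix in two independent ways and force the twist to be trivial by comparison. First I would record two structural facts. Since $\mathcal{C}$ is braided, its group $G$ of invertible objects is abelian, so the normal fixed-point subgroup $H\trianglelefteq G$ stabilizes \emph{every} noninvertible simple object; that is, $h\otimes X\cong X\cong X\otimes h$ for all $h\in H$ and all noninvertible $X\in\mathcal{O}(\mathcal{C})$ (Example \ref{gen}). Second, because $\mathcal{C}=\mathcal{C}_\mathrm{ad}$ and $\mathcal{C}$ is not symmetrically braided, the symmetric center is exactly the pointed part, $C_\mathcal{C}(\mathcal{C})=\mathcal{C}_\mathrm{pt}$; this is precisely the input invoked from \cite[Proposition 3.13]{MR4167662} together with $\mathcal{C}=\mathcal{C}_\mathrm{ad}$, and it reflects the general principle that invertible objects always centralize the adjoint subcategory. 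In particular each $h\in H$, regarded as a simple object of $\mathcal{C}_\mathrm{pt}$, lies in the symmetric center and hence centralizes all of $\mathcal{C}$.

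Next, working with the pseudounitary spherical (pre-modular) structure as in \cite[Lemma 3.1]{220807319}, I would fix a noninvertible $X\in\mathcal{O}(\mathcal{C})$ with $\dim(X)=d$ and evaluate $s_{X,h}$ for $h\in H$. On one hand, since $h$ centralizes $X$, \cite[Proposition 2.5]{mug1} gives $s_{X,h}=\dim(X)\dim(h)=d$. On the other hand, since $h\otimes X\cong X$ the only nonzero fusion coefficient is $N_{X,h}^{X}=1$, so the balancing equation \eqref{nein} collapses to a single term:
\[ s_{X,h}=\theta_X^{-1}\theta_h^{-1}\,\theta_X\dim(X)=\theta_h^{-1}\,d. \]
Comparing the two evaluations and using $d\neq 0$ forces $\theta_h^{-1}=1$, i.e.\ $\theta_h=1$, for every $h\in H$.

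Finally I would translate this into the stated categorical conclusion. The pointed braided fusion subcategory generated by $H$ is classified by a pre-metric group $(H,q)$ with quadratic form $q(h)=\theta_h$ \cite{MR1250465}. Vanishing of all twists, $q\equiv 1$, makes the associated bicharacter $b(g,h)=\theta_{gh}\theta_g^{-1}\theta_h^{-1}$ trivial, so the braiding on this subcategory is the trivial symmetric one; hence $(H,q)$ is the trivial form and the subcategory is braided equivalent to the Tannakian category $\mathrm{Rep}(H)$, as claimed.

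The main obstacle is the second structural fact of the first paragraph, namely that each $h\in H$ genuinely centralizes the noninvertible object $X$. Once $C_\mathcal{C}(\mathcal{C})=\mathcal{C}_\mathrm{pt}$ is in hand (equivalently, that invertibles centralize $\mathcal{C}_\mathrm{ad}=\mathcal{C}$), everything else is the short two-way $S$-matrix computation above, and this is exactly where the hypotheses $\mathcal{C}=\mathcal{C}_\mathrm{ad}$ and ``not symmetrically braided'' enter. I would therefore be careful to confirm that the cited centralizer description applies under precisely these hypotheses, since in the symmetric super-Tannakian situation an invertible object can carry twist $-1$ and the conclusion would require the separate representation-theoretic observation that such an object cannot fix a noninvertible simple.
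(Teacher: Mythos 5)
Your proposal is correct and matches the paper's own argument essentially step for step: both use $C_\mathcal{C}(\mathcal{C})=\mathcal{C}_\mathrm{pt}$ (from \cite[Proposition 3.13]{MR4167662} together with $\mathcal{C}=\mathcal{C}_\mathrm{ad}$), evaluate $s_{h,X}$ once via M\"uger's centralization criterion and once via the balancing equation collapsed to the single term coming from $h\otimes X\cong X$, and conclude $\theta_h=1$, identifying the subcategory with $\mathrm{Rep}(H)$ through the pre-metric group classification of pointed braided categories. The extra caution in your final paragraph is sensible but not needed, since the cited proposition applies under exactly the stated hypotheses.
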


\begin{proof}
Recall that our assumptions imply $C_\mathcal{C}(\mathcal{C})=\mathcal{C}_\mathrm{pt}$ \cite[Proposition 3.13]{MR4167662}.  Using (\ref{nein}), we compute for any noninvertible $X\in\mathcal{O}(\mathcal{C})$ and invertible $h\in H$,
\begin{equation}
\dim(X)=s_{h,X}=\theta_h^{-1}\theta_X^{-1}\theta_{h\otimes X}\dim(h\otimes X)=\theta_h^{-1}\dim(X),
\end{equation}
proving $\theta_h=1$ for all $h\in H$.  The structure of the pointed fusion subcategory of $\mathcal{C}$ corresponding to $H$ is then dictated by the pre-metric group $H$ with trivial quadratic form \cite[Theorem 8.4.12]{tcat}, which is the same pre-metric group corresponding to $\mathrm{Rep}(H)$.
\end{proof}

\begin{lemma}\label{biglem}
If $\mathcal{C}$ is a braided generalized near-group fusion category with a braided equivalence $\mathcal{C}_\mathrm{ad}\simeq\mathcal{C}(\mathfrak{sl}_2,8,q)$ for a primitive $16$th root of unity $q$, then $\mathcal{C}\simeq\mathcal{C}_\mathrm{ad}\boxtimes\mathcal{P}$ for a braided pointed fusion category $\mathcal{P}$.
\end{lemma}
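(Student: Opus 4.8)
The plan is to treat $\mathcal C$ as a pseudounitary premodular category (as set up before Lemma~\ref{tanlem}), record the numerical constraints forced by $\mathcal{C}_\mathrm{ad}\simeq\mathcal{C}(\mathfrak{sl}_2,8,q)$, and then peel off a pointed Deligne factor by finding, inside the invertibles, a complement to the fermion that generates the symmetric center of $\mathcal{C}_\mathrm{ad}$.

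First I would fix the coarse structure. Write $G:=G_R$. Recall that $\mathcal{C}_\mathrm{ad}$ has rank $4$: its invertibles are $(\mathcal{C}_\mathrm{ad})_\mathrm{pt}=\langle f\rangle\simeq\mathrm{sVec}$ for a fermion $f$ with $\theta_f=-1$, it has two noninvertible simples of dimension $d=1+\sqrt2$, and its symmetric center is exactly $\langle f\rangle$. Substituting $d=1+\sqrt2$ into (\ref{eq:1}) together with the divisibility $r=k|H|$ of Example~\ref{yurrr} gives $k|H|=2$ and $k|H|+|H|=3$, hence $|H|=1$ and $k=2$; thus $G$ acts freely on the $|G|$ noninvertible simples of $\mathcal{C}$, $\mathrm{FPdim}(\mathcal{C})=|G|(1+d^2)=|G|(4+2\sqrt2)$, and $\mathrm{FPdim}(\mathcal{C}_\mathrm{ad})=2(4+2\sqrt2)$. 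Since $\mathcal{C}_\mathrm{pt}\cap\mathcal{C}_\mathrm{ad}=\langle f\rangle$, any pointed Deligne complement $\mathcal{P}$ of $\mathcal{C}_\mathrm{ad}$ must have order $|G|/2$ and avoid $f$.

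Next I would pin down how the invertibles braid against $\mathcal{C}_\mathrm{ad}$. For invertible $g$, (\ref{nein}) gives $s_{g,X}=\mu_g(X)\dim(X)$ with $\mu_g(X):=\theta_{gX}\theta_g^{-1}\theta_X^{-1}$, and monodromy with $g$ grades the Grothendieck ring: all simple summands $W$ of a product lie in one component, so $\mu_g(W)=\mu_g(Y)\mu_g(Y')$ whenever $W\subseteq Y\otimes Y'$. Applying this to $Y\otimes Y^\ast$ and taking the summand $W=\mathbbm 1$ shows that common value is $\mu_g(\mathbbm 1)=1$; as $\mathcal{C}_\mathrm{ad}$ is generated by such $Y\otimes Y^\ast$, we get $\mu_g\equiv 1$ on $\mathcal{C}_\mathrm{ad}$, i.e.\ $\mathcal{C}_\mathrm{pt}\subseteq C_\mathcal{C}(\mathcal{C}_\mathrm{ad})$ and in particular $g$ centralizes $f$. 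Because $f$ already centralizes $\mathcal{C}_\mathrm{ad}$ and $\mu_f(gX)=\mu_f(g)\mu_f(X)$, this upgrades to $f\in C_\mathcal{C}(\mathcal{C})$: the fermion is central in all of $\mathcal{C}$.

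The crux, and what I expect to be the only real obstacle, is showing that the slight degeneracy of $\mathcal{C}_\mathrm{ad}$ does not block the factorization, i.e.\ that $f$ is not a square in $G$. Writing $\mathcal{C}_\mathrm{pt}$ as the pre-metric group $(G,q)$ with $q(g)=\theta_g$ and bicharacter $b(x,y)=q(xy)q(x)^{-1}q(y)^{-1}$, suppose $f=h^2$. Then, using bilinearity together with $b(h,h)=q(h^2)q(h)^{-2}=q(h)^2$ and the quadratic identity $q(h^2)=q(h)^4$,
\begin{equation}
b(f,h)=b(h^2,h)=b(h,h)^2=q(h)^4=q(h^2)=q(f)=\theta_f=-1,
\end{equation}
contradicting $b(f,h)=1$, which holds because $f\in C_\mathcal{C}(\mathcal{C})$. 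Hence $f\notin 2G$, so $\langle f\rangle$ is a direct summand and $G=\langle f\rangle\times P$ with $|P|=|G|/2$. Taking $\mathcal{P}$ to be the pointed subcategory on $P$, we have $\mathcal{P}\subseteq C_\mathcal{C}(\mathcal{C}_\mathrm{ad})$, $\mathcal{P}\cap\mathcal{C}_\mathrm{ad}=\mathcal{P}\cap\langle f\rangle=\mathbbm 1$, and $\mathrm{FPdim}(\mathcal{C}_\mathrm{ad})\,\mathrm{FPdim}(\mathcal{P})=\mathrm{FPdim}(\mathcal{C})$; by the standard criterion that two mutually centralizing fusion subcategories meeting in $\mathbbm 1$ whose dimensions multiply to $\mathrm{FPdim}(\mathcal{C})$ realize $\mathcal{C}$ as their Deligne product, I conclude $\mathcal{C}\simeq\mathcal{C}_\mathrm{ad}\boxtimes\mathcal{P}$. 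Everything outside the displayed computation is routine, so the weight of the proof rests on extracting $f\notin 2G$ from the centrality of the fermion.
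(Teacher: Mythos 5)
Your proposal is correct and follows essentially the same route as the paper: both hinge on showing the fermion $f=\delta$ (with $\theta_\delta=-1$) cannot be a square in the group of invertibles --- your quadratic-form computation $b(f,h)=q(h)^4=q(f)=-1$ against $b(f,h)=1$ is, once unwound, exactly the paper's balancing-equation contradiction $1=s_{\delta,h}=-\theta_h^{-1}\theta_{h^\ast}$ versus $\theta_{h^\ast}=\theta_h$ --- and then both split $G\cong\langle\delta\rangle\times L$ and conclude via a Deligne-product criterion. The only cosmetic differences are that you re-derive $\mathcal{C}_\mathrm{pt}\subseteq C_\mathcal{C}(\mathcal{C}_\mathrm{ad})$ where the paper cites it, and you invoke the mutual-centralizer factorization criterion where the paper cites a unique-factorization-of-simple-objects result.
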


\begin{proof}
\par Assume $\mathcal{C}$ is a $K$-extension of $\mathcal{C}_\mathrm{ad}$ for a finite group $K$, i.e.\ $K$ is the universal grading group of $\mathcal{C}$  (see Section \ref{sec:def1}).  Note that all noninvertible $X\in\mathcal{O}(\mathcal{C})$ have $\dim(X)=1+\sqrt{2}$, and $2\cdot1^2+2\cdot(1+\sqrt{2})^2$ is the unique decomposition of $\dim(\mathcal{C}_\mathrm{ad})=8+4\sqrt{2}$ into a nonnegative integer linear combination of $1^2$ and $(1+\sqrt{2})^2$.  Thus \cite[Theorem 3.5.2]{tcat} implies that each graded component of $\mathcal{C}$ contains two invertible and two noninvertible simple objects.  Denote the nontrivial invertible object of $\mathcal{C}_\mathrm{ad}$ by $\delta$ which has order 2 and necessarily permutes the invertible objects in each graded component.  We claim that $h^n\neq\delta$ for any $h\in G$ and $n>1$ which is sufficient to prove that $G\cong C_2\times L$ for some subgroup $L\subset G$ where $\delta$ generates the $C_2$ factor.  The group $G$ is abelian so we may assume $G\cong G_0\times G_1$ is a factorization into the subgroup $G_0$ of elements whose order is a power of $2$ and subgroup $G_1$ whose elements have odd-order.  Since $\delta\in G_0$, it suffices to assume there exists $g\in K$ of order $2$ and invertible $h\in\mathcal{C}_g$ with $h^2=\delta$, i.e.\ $\delta\otimes h=h^\ast=h^{-1}\neq h$ is the invertible object of $\mathcal{C}_g$ distinct from $h$.  Direct computation gives $\theta_\delta=-1$ \cite[Example 18]{MR4079742} for any choice of primitive 16th root of unity $q$ such that $\mathcal{C}_\mathrm{ad}\simeq\mathcal{C}(\mathfrak{sl}_2,8,q)_\mathrm{ad}$.  One consequence is that $\delta\not\in H$ by Lemma \ref{tanlem}.  But on one hand, $\theta_{h^\ast}=\theta_h$ from the definition of $\theta$ \cite[Definition 8.10.1]{tcat} and sphericality, and on the other hand $1=s_{\delta,h}=-\theta_h^{-1}\theta_{h^\ast}$ by Equation (\ref{nein}) and the fact that $\mathcal{C}_\mathrm{ad}\subset C_\mathcal{C}(\mathcal{C}_\mathrm{pt})$ \cite[Proposition 2.1]{MR4195420}.  These cannot occur simultaneously so we conclude $h^2=e$ for both invertible $h\in\mathcal{C}_g$.  Moreover $G\cong C_2\times L$ where $\delta$ generates the $C_2$ factor.  Let $\mathcal{P}\subset\mathcal{C}$ be the pointed braided fusion subcategory corresponding to $L\subset G$.  Then every $X\in\mathcal{O}(\mathcal{C})$ has a unique factorization $Y\otimes Z$, where $Y\in\mathcal{O}(\mathcal{C}_\mathrm{ad})$ and $Z\in\mathcal{O}(\mathcal{P})$.  Hence $\mathcal{C}\simeq\mathcal{C}_\mathrm{ad}\boxtimes\mathcal{P}$ \cite[Corollary 3.9]{MR3633318} since $\mathcal{C}$ is braided.
\end{proof}

\begin{lemma}\label{nottan}
Let $\mathcal{C}$ be a braided generalized near-group fusion category such that $\mathcal{C}=\mathcal{C}_\mathrm{ad}$ and $\mathrm{FPdim}(\mathcal{C})\not\in\mathbb{Z}$.  If $\mathcal{C}_\mathrm{pt}$ is super-Tannakian, then $\mathcal{C}$ is equivalent to $\mathcal{C}(\mathfrak{sl}_2,8,q)_\mathrm{ad}$ where $q$ is any primitive $16$th root of unity.
\end{lemma}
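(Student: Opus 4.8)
The plan is to run the de-equivariantization strategy of Lemma \ref{tan}, adjusted for the fact that the symmetric center is now only super-Tannakian. As there, I would fix the pseudounitary spherical structure with $\dim(\mathcal{C})=|G|(2+kd)$, where $G=G_R$, $H=H_R$, and $d$ is the largest root of $x^2-k|H|x-|H|$ (Example \ref{yurrr}). Since $C_\mathcal{C}(\mathcal{C})=\mathcal{C}_\mathrm{pt}$ is super-Tannakian it contains a maximal Tannakian subcategory $\mathcal{E}\simeq\mathrm{Rep}(\bar G)$ of index two (Example \ref{ex:sym}) together with a fermion $\delta$ satisfying $\theta_\delta=-1$. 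Lemma \ref{tanlem} gives $\theta_h=1$ for all $h\in H$, so $H\subseteq\bar G$ and in particular $\delta\notin H$; moreover, applying the balancing equation (\ref{nein}) to the central object $\delta$ forces $\theta_{\delta\otimes X}=-\theta_X$ for every noninvertible $X$, so $\delta$ acts freely on the noninvertible simples.

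Next I would de-equivariantize by $\mathcal{E}$ to pass to the braided category $\mathcal{B}:=\mathcal{C}_{\bar G}$, which is slightly degenerate with $C_\mathcal{B}(\mathcal{B})=\mathcal{B}_\mathrm{pt}$ the rank-two super-Tannakian category and $\dim(\mathcal{B})=2(2+kd)$. The aim of this step is to reduce to the case $\bar G=1$, i.e.\ $|G|=2$ with $\mathcal{C}_\mathrm{pt}$ super-Tannakian of dimension $2$: there $H\subseteq\ker(\theta)=\{e\}$ forces $|H|=1$, hence $d^2=kd+1$, and there are exactly two noninvertible simples $X$ and $\delta\otimes X=:X'$. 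Using $\theta_{X'}=-\theta_X\neq\theta_{X^\ast}=\theta_X$ rules out $X^\ast=X'$, so $X$ and $X'$ are self-dual and $\mathcal{C}$ has rank $4$ with $X\otimes X\cong\mathbbm{1}\oplus aX\oplus bX'$ for some $a,b\in\mathbb{Z}_{\ge0}$ satisfying $a+b=k$; the hypothesis $\mathcal{C}=\mathcal{C}_\mathrm{ad}$ forces $b\geq1$, since otherwise $X$ would generate a proper rank-two subcategory.

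The crux is to pin down $k=a+b$, and here the argument must diverge from Lemma \ref{tan}. The Galois estimate of \cite[Theorem 1.1.2]{MR3943751} is unavailable: the relevant category is slightly rather than non-degenerate, so its Galois-conjugate global dimension equals $2+2/(kd+1)>2$ and the lower bound $>4/3$ is vacuous. Instead I would extract $d$ directly from the premodular data. Centrality of $\delta$ yields $s_{\delta,X}=d$ and the block structure $s_{X,X}=s_{X,X'}=s_{X',X'}=\theta_X^{-2}+(a-b)d\,\theta_X^{-1}$, and I would combine the constraints that the twists are roots of unity, that $\mathbbm{1}\oplus\delta$ is exactly the symmetric center, and that $|s_{X,Y}|\le\dim(X)\dim(Y)$, to force $k=2$ and hence $d=1+\sqrt2$. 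Self-duality together with $\mathcal{C}=\mathcal{C}_\mathrm{ad}$ then eliminates $(a,b)\in\{(2,0),(0,2)\}$, leaving the fusion rules $X\otimes X\cong\mathbbm{1}\oplus X\oplus X'$ of $\mathcal{C}(\mathfrak{sl}_2,8,q)_\mathrm{ad}$.

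With the fusion rules identified, the classification quoted in Example \ref{lie} (from \cite{MR1237835}) shows that any braided fusion category with these fusion rules is braided equivalent to $\mathcal{C}(\mathfrak{sl}_2,8,q)_\mathrm{ad}$ for a primitive $16$th root of unity $q$, completing the proof. I expect the genuine difficulty to lie in the two places where the degeneracy bites: first, justifying the reduction to $|G|=2$ (equivalently, that no self-adjoint generalized near-group arises as a nontrivial $\bar G$-equivariantization of $\mathcal{C}(\mathfrak{sl}_2,8,q)_\mathrm{ad}$), and second, squeezing $d=1+\sqrt2$ out of the ribbon structure without recourse to a global-dimension lower bound.
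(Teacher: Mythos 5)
Your opening moves are sound and match the paper's: the pseudounitary normalization, the fermion $\delta$ with $\theta_\delta=-1$, the balancing computation giving $\theta_{\delta\otimes X}=-\theta_X$ (so $\delta$ acts freely and $\delta\notin H$), the twist argument for self-duality of the noninvertibles, and the correct diagnosis that the Galois bound of \cite[Theorem 1.1.2]{MR3943751} is vacuous here. But the two steps you defer to the end are not technical residue --- they \emph{are} the lemma --- and neither of your sketches would close. Consider first forcing $k=2$. Your constraint list (twists are roots of unity, symmetric center exactly $\{\mathbbm{1},\delta\}$, and $|s_{X,Y}|\le\dim(X)\dim(Y)$) provably cannot do it. Here is the most it yields: in any premodular category each column $Y\mapsto\tilde s_{X,Y}/\dim(X)$ of the $S$-matrix is a character of the Grothendieck ring (see \cite[Section 8.13]{tcat}); since $\tilde s_{X,\delta}=d$ this character sends $\delta\mapsto 1$, and since $X$ is not central it cannot be the Frobenius--Perron character, whence $\tilde s_{X,X}=d\sigma(d)=-1$. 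With your notation $X\otimes X\cong\mathbbm{1}\oplus aX\oplus bX'$, this reads $(a-b)d=-(\theta_X+\theta_X^{-1})$, so $|a-b|\,d\le 2$: odd $k\ge3$ dies, but for \emph{every} even $k$ the data $a=b=k/2$, $\theta_X=\pm i$, $\tilde s_{X,X}=-1$ satisfies every constraint you list. Concretely $k=4$ (so $d=2+\sqrt5$, $X\otimes X\cong\mathbbm{1}\oplus 2X\oplus 2X'$) survives your argument, and even the Galois bound cannot kill it since the conjugate dimension is $20-8\sqrt5>4/3$. Eliminating such cases is exactly where the paper imports the classification of braided fusion categories of rank at most $4$ \cite{MR3548123,ost15} --- applied not to $\mathcal{C}$ but to the de-equivariantization $\mathcal{C}_K$ by the index-$2$ maximal Tannakian subcategory (your $\mathcal{B}$), which is again generalized near-group because its dimension $2(2+kd)$ is irrational. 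This pins $\dim(\mathcal{C}_K)$ to $(5+\sqrt5)/2$ or $4(2+\sqrt2)$; the first is discarded since $\dim((\mathcal{C}_K)_\mathrm{pt})=2$ must divide $\dim(\mathcal{C}_K)$ \cite[Proposition 8.15]{ENO}, and then $2(2+kd)=4(2+\sqrt2)$ plus an algebraic-norm argument gives $k\in\{1,2\}$.

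As for your reduction to $|G|=2$: the paper never proves such a reduction, and you should not expect to either, because the rank-$4$ conclusion emerges only a posteriori from a case split on $k\in\{1,2\}$. When $k=2$ (so $|H|=1$, $d=1+\sqrt2$), fusion-ring combinatorics inside $\mathcal{C}$ itself --- $X\otimes X^\ast\cong\mathbbm{1}\oplus Y\oplus Z$ with $Y\not\cong Z$ (a rank-$3$ braided near-group is impossible since those have integer dimension \cite[Theorem III.4.6]{thornton2012generalized}), then $Y\otimes Z\cong\delta\oplus Y\oplus Z$ --- shows $|\mathcal{O}(\mathcal{C})|=4$, and only then does the rank-$4$ classification finish. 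When $k=1$ one has $|H|=4$, so $|G|\ge 4$ and your reduced setting is simply false; the paper kills this case by a \emph{second} de-equivariantization by an order-$2$ subgroup $L\subset H$ (Tannakian by Lemma \ref{tanlem}), splitting the $d$-dimensional simples in half, invoking Lemma \ref{biglem} to factor $\mathcal{C}_L\simeq(\mathcal{C}_L)_\mathrm{ad}\boxtimes\mathcal{P}$, and observing that reconstructing $\mathcal{C}\simeq(\mathcal{C}_L)^L$ would force $((\mathcal{C}_L)_\mathrm{ad})^L$ to be a near-group category with Grothendieck ring $R(C_2^2,4)$, which admits no braided categorification. This equivariantization analysis is precisely the ``genuine difficulty'' you name but do not address; without it, or the case split that replaces it, your proof says nothing about the categories with $|G|>2$ that the hypotheses allow a priori.
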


\begin{proof}
We retain the same notation as in the proof of Lemma \ref{tan}.  Let $\mathcal{D}\subset\mathcal{C}_\mathrm{pt}=C_\mathcal{C}(\mathcal{C})$ be a maximal Tannakian subcategory with $\mathrm{FPdim}(\mathcal{C}_\mathrm{pt})=2\mathrm{FPdim}(\mathcal{D})$ \cite[Corollary 9.9.32(ii)]{tcat}. Thus $\mathcal{D}\simeq\mathrm{Rep}(K)$ is a braided equivalence for a finite group $K\subset G$ of index 2.  There are exactly 2 orbits of the $\otimes$-action of $\mathcal{O}(\mathcal{D})$ on $\mathcal{O}(\mathcal{C}_\mathrm{pt})$ which implies $|\mathcal{O}((\mathcal{C}_K)_\mathrm{pt})|=2$.  Fix a noninvertible simple object $X$ of $\mathcal{C}_K$ which must have $\dim(X)=p_Xd$ for some $p_X\in\mathbb{Q}_{>0}$ \cite[Corollary 2.13]{MR3059899} (also refer to Section \ref{sec:def2}).  Decomposing $X\otimes X^\ast$ into simple summands and measuring its dimension implies
\begin{equation}\label{siiix}
(p_Xd)^2=q_Xd+r_X
\end{equation}
for some $q_X\in\mathbb{Q}_{>0}$ where $r_X=2$ if $X$ is fixed by the nontrivial invertible object and $r_X=1$ otherwise since this is the only nontrivial simple object of rational dimension in $\mathcal{C}_K$.  Using $d^2=k|H|d+|H|$, we then have
\begin{equation}
(p_X^2k|H|-q_X)d+(p_X^2|H|-r_X)=0.
\end{equation} 
As $d\not\in\mathbb{Z}$, then $p_X^2|H|=r_X$ and $p_X^2k|H|=q_X$; moreover $p_X^2=r_X/|H|$ and $q_X=kr_X$.  This implies $r_X=r_Y$ for all all noninvertible $X,Y\in\mathcal{O}(\mathcal{C}_K)$ as $1/|H|$ and $2/|H|$ cannot both be perfect squares in $\mathbb{Q}$.   Therefore $p_X$ is determined by $|H|$, and moreover $q_X$ is determined by $k$ and $|H|$.  In other words, all noninvertible simple objects of $\mathcal{C}_K$ are the same irrational dimension, i.e.\ $\mathcal{C}_K$ is a braided generalized near-group fusion category with a rank 2 pointed subcategory and $\mathrm{FPdim}(\mathcal{C}_K)\not\in\mathbb{Z}$.  Such a braided fusion category has $|\mathcal{O}(\mathcal{C}_K)|\leq4$ since the invertible objects act transitively on the noninvertible simple objects.  All braided fusion categories fitting this description are known \cite{MR3548123,ost15}.  We conclude the $\dim(\mathcal{C}_K)=(1/2)(5+\sqrt{5})$ or $\dim(\mathcal{C}_K)=4(2+\sqrt{2})$.  In either case $r_X=1$ in Equation (\ref{siiix}) and thus $q_X=k$.  But 
\begin{equation}\label{ate}
\dim(\mathcal{C}_K)=\dfrac{\dim(\mathcal{C})}{\dim(\mathcal{C}_\mathrm{pt})/2}=2(2+kd)
\end{equation}
as well, thus $\dim(\mathcal{C}_K)\neq(1/2)(5+\sqrt{5})$ since $\dim((\mathcal{C}_K)_\mathrm{pt})=2\nmid(1/2)(5+\sqrt{5})$ violating \cite[Proposition 8.15]{ENO}.  Therefore $\mathcal{C}_K\simeq\mathcal{C}(\mathfrak{sl}_2,8,q)$ for a primitive $16$th root of unity $q$ and $p_Xd=1+\sqrt{2}$.  Equation (\ref{ate}) implies $2(2+2d)=2(2+kd)=4(2+\sqrt{2})$, thus $kd=2+2\sqrt{2}$.  Noting that the algebraic norm of $2+2\sqrt{2}$ is $4$ and $k\in\mathbb{Z}_{\geq1}$ divides it as $d$ is an algebraic integer, then $k\in\{1,2\}$.

\paragraph{Case $k=2$:} Here we have $d=1+\sqrt{2}$, and we note that $1\cdot1+2\cdot(1+\sqrt{2})$ is the unique decomposition of $(1+\sqrt{2})^2$ into a nonnegative integer linear combination of $1$ and $1+\sqrt{2}$.  Therefore for any $X\in\mathcal{O}(\mathcal{C})$ with $\dim(X)=d$, we have $X\otimes X^\ast\cong\mathbbm{1}\oplus Y\oplus Z$ for some $Y,Z\in\mathcal{O}(\mathcal{C})$ with $\dim(Y)=\dim(Z)=d$.  Hence $Y\otimes Y^\ast\cong\mathbbm{1}\oplus Y\oplus Z$ and $Z\otimes Z^\ast\cong\mathbbm{1}\oplus Y\oplus Z$ as well.  Note that $Y\not\cong Z$, or else $Y$ $\otimes$-generates a braided near-group fusion subcategory of rank $3$; but this cannot occur as all such braided fusion categories have integer dimension \cite[Theorem III.4.6]{thornton2012generalized}.  These fusion rules imply \cite[Proposition 2.10.8]{tcat}
\begin{align}
1&=\dim_\mathbb{C}(\mathrm{Hom}_\mathcal{C}(Y\otimes Y^\ast,Z))=\dim_\mathbb{C}(\mathrm{Hom}_\mathcal{C}(Y,Y\otimes Z)),\mathrm{ and} \\
1&=\dim_\mathbb{C}(\mathrm{Hom}_\mathcal{C}(Z\otimes Z^\ast,Y))=\dim_\mathbb{C}(\mathrm{Hom}_\mathcal{C}(Z,Y\otimes Z)).
\end{align}
Therefore, there exists a nontrivial invertible object $\delta$ of order 2 such that $Y\otimes Z\cong\delta\oplus Y\oplus Z$.  Now the left-hand side of $Y\otimes Y^\ast\cong\mathbbm{1}\oplus Y\oplus Z$ being self-dual implies $Y$ is self-dual, or $Y^\ast\cong Z$.  In either case, $\delta\otimes Y\cong Z^\ast$ and $\delta\otimes Z\cong Y^\ast$ are isomorphic to either $Y$ or $Z$.  Hence $|\mathcal{O}(\mathcal{C})|=|\mathcal{O}(\mathcal{C}_\mathrm{ad})|=4$ and $\mathcal{C}$ is a braided fusion category with $\dim(\mathcal{C})=4(2+\sqrt{2})$, which are classified as in the statement of the lemma \cite[Theorem 4.11]{MR3548123}.

\paragraph{Case $k=1$:} Here we have $\dim(X)=d=2+2\sqrt{2}$ (a root of $x^2-4x-4$) for some $X\in\mathcal{O}(\mathcal{C})$, hence $|H|=4$.  Let $h\in H$ of order 2.  Since $H$ is Tannakian by Lemma \ref{tanlem}, we can de-equivariantize by $L$, the order 2 subgroup generated by $h$.  As fixed-points of elements of order $2$, any $X\in\mathcal{O}(\mathcal{C})$ of dimension $d$ splits into two non-isomorphic simple objects of dimension $d/2=1+\sqrt{2}$, while the $L$-orbits of invertible objects remain invertible in $\mathcal{C}_L$.  Therefore $\mathcal{C}_L$ is a braided generalized near-group fusion category with nontrivial dimension $d=1+\sqrt{2}$.  By the argument in the $k=2$ case above, $(\mathcal{C}_L)_\mathrm{ad}\simeq\mathcal{C}(\mathfrak{sl}_2,8,q)$ for a primitive $16$th root of unity $q$, and moreover $\mathcal{C}_L\simeq(\mathcal{C}_L)_\mathrm{ad}\boxtimes\mathcal{P}$ for a pointed braided fusion category $\mathcal{P}$ by Lemma \ref{biglem}.  In sum, if $\mathcal{C}$ exists, then $L$ acts on $\mathcal{C}_L$ by tensor autoequivalences and we may consider $\mathcal{C}$ as the equivariantization $\mathcal{C}\simeq((\mathcal{C}_L)_\mathrm{ad}\boxtimes\mathcal{P})^L$.  Any such tensor autoequivalence preserves (but may act nontrivially) on $(\mathcal{C}_L)_\mathrm{ad}$, therefore the same is true of $\mathcal{P}$.   Moreover $\mathcal{C}=(\mathcal{C}_L)^L\simeq((\mathcal{C}_L)_\mathrm{ad})^L\boxtimes(\mathcal{P})^L$.  The $L$-action restricted to $(\mathcal{C}_L)_\mathrm{ad}$ must permute the simple objects of dimension $d/2$ so that there exists a simple object of dimension $d$ in $\mathcal{C}$ \cite[Corollary 2.13]{MR3059899}.  But there is a unique nontrivial tensor autoequivalence of $(\mathcal{C}_L)_\mathrm{ad}\simeq\mathcal{C}(\mathfrak{sl}_2,8,q)$ as a fusion category, and $((\mathcal{C}_L)_\mathrm{ad})^L$ is the near-group fusion category with Grothendieck ring $R(C_2^2,4)$ \cite[Example 12.11]{MR1832764}.  This cannot occur as $R(C_2^2,4)$ has no braided categorifications \cite[Theorem III.4.6]{thornton2012generalized}.
\end{proof}

\begin{theorem}\label{thm:irr}
Let $\mathcal{C}$ be a braided generalized near-group fusion category with $\mathrm{FPdim}(\mathcal{C})\not\in\mathbb{Z}$.  Then $\mathcal{C}\simeq\mathcal{C}_\mathrm{ad}\boxtimes\mathcal{P}$ is a braided equivalence where $\mathcal{P}$ is a pointed braided fusion category and $\mathcal{C}_\mathrm{ad}$ is either $\mathcal{C}(\mathfrak{sl}_2,5,q)_\mathrm{ad}$ for a primitive 10th root of unity $q$ or $\mathcal{C}(\mathfrak{sl}_2,8,q)_\mathrm{ad}$ for a primitive $16$th root of unity $q$.
\end{theorem}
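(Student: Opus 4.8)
The plan is to reduce the general statement to the self-adjoint case already settled by Lemmas \ref{tan} and \ref{nottan}, and then to split off a pointed factor. First I would note that $\mathrm{FPdim}(\mathcal{C})\notin\mathbb{Z}$ forces $\mathcal{C}$ to be non-nilpotent \cite[Remark 4.7(6)]{nilgelaki}, so Lemma \ref{bigone} applied to the Grothendieck ring places us in the case $(\mathcal{C}_\mathrm{ad})_\mathrm{ad}=\mathcal{C}_\mathrm{ad}$. As in the discussion preceding Example \ref{lie}, this makes $\mathcal{C}_\mathrm{ad}$ a braided generalized near-group fusion category satisfying $\mathcal{C}_\mathrm{ad}=(\mathcal{C}_\mathrm{ad})_\mathrm{ad}$, with symmetric center $C_{\mathcal{C}_\mathrm{ad}}(\mathcal{C}_\mathrm{ad})=(\mathcal{C}_\mathrm{ad})_\mathrm{pt}$ symmetrically braided \cite[Proposition 3.13]{MR4167662}. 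Since the universal grading gives $\mathrm{FPdim}(\mathcal{C})=|K|\,\mathrm{FPdim}(\mathcal{C}_\mathrm{ad})$ for an integer $|K|$ \cite[Theorem 3.5.2]{tcat}, we still have $\mathrm{FPdim}(\mathcal{C}_\mathrm{ad})\notin\mathbb{Z}$. By Example \ref{ex:sym} the symmetric center of $\mathcal{C}_\mathrm{ad}$ is Tannakian or super-Tannakian, so exactly one of Lemmas \ref{tan} and \ref{nottan} applies to $\mathcal{C}_\mathrm{ad}$ (in the role of their ``$\mathcal{C}$'') and identifies it as $\mathcal{C}(\mathfrak{sl}_2,5,q)_\mathrm{ad}$ for a primitive $10$th root of unity $q$, or as $\mathcal{C}(\mathfrak{sl}_2,8,q)_\mathrm{ad}$ for a primitive $16$th root of unity $q$, respectively.

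It then remains to produce the braided equivalence $\mathcal{C}\simeq\mathcal{C}_\mathrm{ad}\boxtimes\mathcal{P}$ with $\mathcal{P}$ pointed. When $\mathcal{C}_\mathrm{ad}\simeq\mathcal{C}(\mathfrak{sl}_2,8,q)_\mathrm{ad}$ this is exactly Lemma \ref{biglem}, so no further work is required. The remaining case $\mathcal{C}_\mathrm{ad}\simeq\mathcal{C}(\mathfrak{sl}_2,5,q)_\mathrm{ad}$ is genuinely different, since the argument of Lemma \ref{biglem} relies on the order-two invertible $\delta$ with $\theta_\delta=-1$, which the Fibonacci category lacks. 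Here I would instead exploit nondegeneracy: the symmetric center of $\mathcal{C}_\mathrm{ad}$ is trivial (its only proper fusion subcategory is $\mathbbm{1}$, and $\mathcal{C}_\mathrm{ad}$ itself is not symmetric because it has irrational dimension), so $\mathcal{C}_\mathrm{ad}$ is nondegenerately braided. Müger's decomposition for a nondegenerate braided fusion subcategory \cite{mug1} then gives a braided equivalence $\mathcal{C}\simeq\mathcal{C}_\mathrm{ad}\boxtimes C_\mathcal{C}(\mathcal{C}_\mathrm{ad})$, and it suffices to set $\mathcal{P}:=C_\mathcal{C}(\mathcal{C}_\mathrm{ad})$ and verify it is pointed.

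To confirm $\mathcal{P}$ is pointed I would compute Frobenius-Perron dimensions. With $d=\tfrac12(1+\sqrt5)$ the unique noninvertible dimension of $\mathcal{C}$, every simple object of $\mathcal{C}$ is of the form $A\otimes B$ with $A\in\mathcal{O}(\mathcal{C}_\mathrm{ad})$ and $B\in\mathcal{O}(\mathcal{P})$ under the Deligne factorization, hence has dimension $\dim(A)\dim(B)\in\{1,d\}$. Choosing $A$ noninvertible gives $d\dim(B)\in\{1,d\}$; as $d\dim(B)=1$ would force $\dim(B)=d^{-1}<1$, which is impossible for a simple object, we get $\dim(B)=1$ for every simple $B$, so $\mathcal{P}$ is pointed, and it is braided as a centralizer in a braided category. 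I expect the main obstacle to lie not in any single computation but in cleanly justifying the reduction of the first paragraph—namely that $\mathcal{C}_\mathrm{ad}$ is again a two-orbit (generalized near-group) category with $(\mathcal{C}_\mathrm{ad})_\mathrm{pt}$ as its symmetric center, so that Lemmas \ref{tan}--\ref{nottan} truly apply—and in recognizing that the two surviving categories demand structurally distinct factorization arguments: the modular splitting above for Fibonacci, versus the grading-and-ribbon-twist analysis of Lemma \ref{biglem} for the slightly degenerate $\mathcal{C}(\mathfrak{sl}_2,8,q)_\mathrm{ad}$.
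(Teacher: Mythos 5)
Your proposal is correct and takes essentially the same approach as the paper's proof: reduce to the self-adjoint case via Lemma \ref{bigone}, identify $\mathcal{C}_\mathrm{ad}$ through the Tannakian/super-Tannakian dichotomy using Lemmas \ref{tan} and \ref{nottan}, then obtain the factorization from M\"uger's decomposition $\mathcal{C}\simeq\mathcal{C}_\mathrm{ad}\boxtimes C_\mathcal{C}(\mathcal{C}_\mathrm{ad})$ in the nondegenerate $\mathcal{C}(\mathfrak{sl}_2,5,q)_\mathrm{ad}$ case and from Lemma \ref{biglem} in the $\mathcal{C}(\mathfrak{sl}_2,8,q)_\mathrm{ad}$ case. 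The only difference is that you spell out details the paper leaves implicit, namely the non-integrality of $\mathrm{FPdim}(\mathcal{C}_\mathrm{ad})$, the nondegeneracy of the Fibonacci category, and the dimension count showing $C_\mathcal{C}(\mathcal{C}_\mathrm{ad})$ is pointed.
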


\begin{proof}
Let $\mathcal{C}$ be a generalized near-group fusion category such that $\mathrm{FPdim}(\mathcal{C})\not\in\mathbb{Z}$.  Then $(\mathcal{C}_\mathrm{ad})_\mathrm{ad}=\mathcal{C}_\mathrm{ad}$ by Lemma \ref{bigone} and therefore $(\mathcal{C}_\mathrm{ad})_\mathrm{pt}$ is symmetrically braided \cite[Proposition 3.13]{MR4167662}.  If $(\mathcal{C}_\mathrm{ad})_\mathrm{pt}$ is Tannakian, then Lemma \ref{tan} implies $\mathcal{C}_\mathrm{ad}\simeq\mathcal{C}(\mathfrak{sl}_2,5,q)_\mathrm{ad}$ for a primitive 10th root of unity $q$.  Each of these categories is nondegenerately braided and therefore $\mathcal{C}\simeq\mathcal{C}_\mathrm{ad}\boxtimes C_\mathcal{C}(\mathcal{C}_\mathrm{ad})$ \cite[Theorem 4.2]{mug1}.  Moreover $C_\mathcal{C}(\mathcal{C}_\mathrm{ad})$ is pointed otherwise $\mathcal{C}$ has simple objects with at least 3 distinct dimensions.  If $(\mathcal{C}_\mathrm{ad})_\mathrm{pt}$ is super-Tannakian, Lemma \ref{nottan} implies $\mathcal{C}_\mathrm{ad}\simeq\mathcal{C}(\mathfrak{sl}_2,8,q)_\mathrm{ad}$ is a braided equivalence for a primitive $16$th root of unity $q$, and our result follows from Lemma \ref{biglem}.
\end{proof}

%%%%%%%%%%%%%%%%%%%%%%%%%%%

\section{Epilogue: Nilpotency}\label{sec:nurrr}

Braided generalized near-group fusion categories $\mathcal{C}$ which are also nilpotent are the most tractable examples with integer Frobenius-Perron dimension.  Since $X\otimes X^\ast\cong Y\otimes Y^\ast$ for all simple $X,Y$ \cite[Lemma 2.8(d)]{220807319}, then $\mathcal{C}$ is nilpotent if and only if $\mathcal{C}_\mathrm{ad}$ is pointed, which by the transitivity of the action of invertible objects on noninvertible simple objects occurs if and only if $X\otimes Y$ is a sum of invertible objects for every noninvertible $X,Y\in\mathcal{O}(\mathcal{C})$.  These lie in the class of nilpotent fusion categories which have been called the braided generalized Tambara-Yamagami fusion categories in the past \cite[Section 5]{MR3071133}.   It suffices to understand those whose Frobenius-Perron dimension is a prime power by \cite[Theorem 1.1]{drinfeld2007grouptheoretical} which states that braided nilpotent fusion categories have a factorization into a product of braided fusion categories of prime power dimension.  Any such fusion category is $C_2$-graded, with all the noninvertible simple objects in the nontrivial component, which implies $\mathcal{C}$ is a product of a braided pointed fusion category, and a braided generalized near-group fusion category of dimension $2^n$ for some positive integer $n$ by \cite[Theorem 3.5.2]{tcat}.  The fact that $\dim(X)^2$ is always a power of $2$ in nilpotent braided generalized near-group fusion categories also follows from the following (cf.\ \cite[Theorem 1.2(1)]{siehler}).

\begin{proposition}\label{elem}
Let $\mathcal{C}$ be a $C_2$-graded extension of a pointed fusion category.  If $\mathcal{C}$ is braided, then $\mathcal{O}(\mathcal{C}_\mathrm{ad})$ is an elementary abelian $2$-group. 
\end{proposition}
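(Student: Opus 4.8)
The plan is to reduce the statement to a question about the stabilizers of the noninvertible simple objects and then to recognize the resulting local structure as a braided Tambara--Yamagami category, to which the classification \cite{siehler} applies. Since $\mathcal{C}$ is a $C_2$-graded extension of a pointed category $\mathcal{C}_0$, the adjoint subcategory $\mathcal{C}_\mathrm{ad}$ is contained in $\mathcal{C}_0$ and is therefore pointed; write $H:=\mathcal{O}(\mathcal{C}_\mathrm{ad})$, a finite abelian group since $\mathcal{C}$ is braided. For a noninvertible simple $X$ in the nontrivial component, $X\otimes X^\ast$ lies in $\mathcal{C}_0$ and is multiplicity-free with $X\otimes X^\ast\cong\bigoplus_{h\in\mathrm{Stab}(X)}h$, where $\mathrm{Stab}(X)=\{h\in\mathcal{O}(\mathcal{C}_0):h\otimes X\cong X\}$; in particular $\dim(X)^2=|\mathrm{Stab}(X)|$. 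As $\mathcal{C}_\mathrm{ad}$ is generated by the objects $X\otimes X^\ast$, the group $H$ is generated by the subgroups $\mathrm{Stab}(X)$, and since a finite abelian group generated by elements of order dividing $2$ has exponent dividing $2$, it suffices to show that each $\mathrm{Stab}(X)$ is an elementary abelian $2$-group.

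First I would record the available local data. By \cite[Proposition 2.1]{MR4195420}, $\mathcal{C}_\mathrm{ad}\subseteq C_\mathcal{C}(\mathcal{C}_\mathrm{pt})$, and since $\mathcal{C}_\mathrm{ad}$ is itself pointed this shows that $\mathcal{C}(\mathrm{Stab}(X))\subseteq\mathcal{C}_\mathrm{ad}$ is symmetrically braided; hence $\theta_h\in\{\pm1\}$ for $h\in\mathrm{Stab}(X)$ and $h\mapsto\theta_h$ is a homomorphism. Feeding $h\otimes X\cong X$ into the balancing equation (\ref{nein}) gives the monodromy scalar $\theta_h^{-1}$ for the pair $(h,X)$, and multiplicativity of the monodromy in its first argument forces the symmetric bicharacter of $\mathcal{C}(\mathrm{Stab}(X))$ to be trivial. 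This pins down the pre-modular data of $\mathcal{C}(\mathrm{Stab}(X))$ but, crucially, does not by itself bound the exponent: the groups $C_4$ and $C_2\times C_2$, for instance, carry identical, purely Tannakian pre-modular data. The exponent-$2$ conclusion is therefore a genuinely global obstruction, sensitive to the associativity and hexagon constraints and not merely to the twists and $S$-matrix entries.

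To access that obstruction I would pass to a Tambara--Yamagami subcategory. When $X$ is self-dual, the objects $\{h:h\in\mathrm{Stab}(X)\}\cup\{X\}$ are closed under tensor product and duality with $X\otimes X\cong\bigoplus_{h\in\mathrm{Stab}(X)}h$, so they span a fusion subcategory with Tambara--Yamagami fusion rules on $\mathrm{Stab}(X)$; it inherits a braiding from $\mathcal{C}$, and \cite{siehler} immediately yields that $\mathrm{Stab}(X)$ is an elementary abelian $2$-group. The main obstacle is the general case, where $X$ need not be self-dual and need not admit a self-dual translate under $\mathcal{O}(\mathcal{C}_0)$ --- this already happens for the fusion rings $R(2,n)$ of Example \ref{ty}, whose noninvertible simples form a single orbit with no self-dual member --- so no honest Tambara--Yamagami subcategory is visible. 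Here I would reduce to the self-dual situation by de-equivariantizing the subcategory $\langle X\rangle$ by a central Tannakian subcategory of $\mathcal{C}_\mathrm{ad}$: for an odd-order element $h$ one has $\theta_h=1$, so $h$ centralizes $X$ and hence all of $\langle X\rangle$, making $\mathrm{Rep}(\langle h\rangle)$ central and available for de-equivariantization, while tracking how the support $\mathrm{Stab}(X)$ and the dimension $\dim(X)^2=|\mathrm{Stab}(X)|$ transform. Carrying out this reduction so that the hypotheses of the proposition are preserved and the braided Tambara--Yamagami classification can be applied --- in particular ruling out odd-order and order-$4$ elements uniformly --- is the technical heart of the argument, and is exactly the point where the global nature of the obstruction must be confronted rather than circumvented by dimension counting alone.
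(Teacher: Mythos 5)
Your reduction to stabilizers and your treatment of the self-dual case are sound: when $X\cong X^\ast$, the objects $\mathrm{Stab}(X)\cup\{X\}$ do span a braided fusion subcategory with Tambara--Yamagami fusion rules, and the classification in \cite{siehler} forces $\mathrm{Stab}(X)$ to be an elementary abelian $2$-group. But the proof stops there. The non-self-dual case --- which, as you yourself note, genuinely occurs (e.g.\ for the fusion rules $R(2,n)$ of Example \ref{ty}, where no invertible translate of $X$ is self-dual) --- is never proved; it is only labeled ``the technical heart.'' Moreover, the reduction you sketch cannot close it: de-equivariantizing by odd-order central elements only removes the odd part of $\mathrm{Stab}(X)$, and in the problematic examples the category already has dimension a power of $2$, so there is no odd part to remove and the procedure makes no progress on exactly the elements you must rule out, namely elements of order $4$ in the stabilizer of a non-self-dual simple. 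Since you explicitly flag that step as unresolved, what you have is a proof of a special case plus a research plan, not a proof of the proposition.

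For comparison, the paper's argument avoids the self-dual/non-self-dual dichotomy entirely, and it is worth seeing why it succeeds where premodular data stall. By \cite[Theorem 3.16]{MR3059669}, any braided $C_2$-graded extension of a pointed category carries a \emph{nondegenerate} symmetric bilinear form $\chi:H\times H\to\mathbb{C}^\times$ on $H=\mathcal{O}(\mathcal{C}_\mathrm{ad})$, realized in the associators as $a_{g,X,h}=\chi(g,h)$ for $g,h\in H$ and noninvertible $X$. Writing the first hexagon axiom for the triples $(g,X,h)$ and $(h,X,g)$ and multiplying the two resulting identities, the unknown associator ratios $\alpha_{X,g,h}/\alpha_{X,h,g}$ cancel, leaving $\chi(g,h)^2=\tau_{h,g}\tau_{g,h}$, and this monodromy is $1$ because $\mathcal{C}_\mathrm{ad}$ centralizes $\mathcal{C}_\mathrm{pt}$ \cite[Proposition 2.1]{MR4195420}. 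A nondegenerate symmetric bilinear form taking values in $\{\pm1\}$ exists only on an elementary abelian $2$-group. The obstruction you correctly observed to be invisible at the level of twists and $S$-matrix entries (the $C_4$ versus $C_2\times C_2$ ambiguity) is thus captured at the associator level by the nondegeneracy of $\chi$ --- a datum available for every noninvertible $X$, self-dual or not. If you want to salvage your approach, replace the appeal to \cite{siehler} by this hexagon computation; it is essentially Siehler's own argument run inside $\mathcal{C}$, with no need for an honest Tambara--Yamagami subcategory.
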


\begin{proof}
Let $G:=\mathcal{O}(\mathcal{C}_\mathrm{pt})$ and $H:=\mathcal{O}(\mathcal{C}_\mathrm{ad})\subset G$ the fixed-point subgroup of the nontrivial component.  Such fusion categories $\mathcal{C}$ are characterized in \cite[Theorem 3.16]{MR3059669} where $G$ is denoted $S$, and $H$ is denoted $A$.  Almost all of the characterizing data is irrelevant to our argument except the symmetric nondegenerate bilinear form $\chi:H\times H\to\mathbb{C}^\times$.  This represents the associative isomorphisms $a_{g,X,h}=\chi(g,h)$ for any $g,h\in H$ and noninvertible $X\in\mathcal{O}(\mathcal{C})$ \cite[Corollary 3.11]{MR3059669}.  The associators $a_{g,X,h}$ and $a_{X,g,h}$ as well as the braiding isomorphisms $\tau_{g,h}$ and $\tau_{g,X}$ are nonzero scalars.  Fix $g,h\in H$.  The (first) hexagon axiom \cite[Definition 8.1]{tcat} for the triple of simple objects $g,X,h$ is
\begin{align}
&&\alpha_{g,X,h}\tau_{g,X}\alpha_{X,h,g}&=\tau_{g,X}\alpha_{X,g,h}\tau_{g,h} \\
\Rightarrow&&\chi(g,h)&=\dfrac{\alpha_{X,g,h}}{\alpha_{X,h,g}}\tau_{g,h},\label{dirty1}
\end{align}
and the (first) hexagon axiom for $h,X,g$ is
\begin{align}
&&\alpha_{h,X,g}\tau_{h,X}\alpha_{X,g,h}&=\tau_{h,X}\alpha_{X,h,g}\tau_{h,g} \\
\Rightarrow&&\chi(h,g)&=\dfrac{\alpha_{X,h,g}}{\alpha_{X,g,h}}\tau_{h,g}.\label{dirty2}
\end{align}
Multiplying (\ref{dirty1}) and (\ref{dirty2}) and using the symmetry of $\chi$ yields
\begin{equation}
\chi(g,h)^2=\chi(h,g)\chi(g,h)=\dfrac{\alpha_{X,h,g}}{\alpha_{X,g,h}}\tau_{h,g}\dfrac{\alpha_{X,g,h}}{\alpha_{X,h,g}}\tau_{g,h}=\tau_{h,g}\tau_{g,h}=1,
\end{equation}
since $g$ and $h$ centralize one another by \cite[Proposition 2.1]{MR4195420}.  A nondegenerate symmetric bilinear form taking only the values $\pm1$ only exists for elementary abelian $2$-groups (refer to \cite[Section 3.2]{siehler}, for example).
\end{proof}

\begin{example}
\par Copious examples can be constructed with fusion rules of $R(m,n)$ for $m,n\in\mathbb{Z}_{\geq1}$ (see Example \ref{ty}) from any braided Tambara-Yamagami fusion category, and and braided pointed fusion category on a cyclic $2$-group.  For a classical example, first consider $\mathrm{Rep}(Q_8)\boxtimes\mathrm{Rep}(C_4)$.  Let $\rho\in\mathrm{Rep}(Q_8)$ be simple and not invertible and $g\in\mathrm{Rep}(C_4)$ be invertible of order 4.  Then $\rho\boxtimes g$ generates a proper subcategory $\mathcal{D}$, consisting of $8$ invertible objects of order at most 2, $h\boxtimes e$ and $h\boxtimes g^2$ for all invertible $h$ in $\mathrm{Rep}(Q_8)$, and $2$ nonisomorphic noninvertible simple objects $\rho\boxtimes g$ and $\rho\boxtimes g^3$.  The fusion rules of $\mathcal{D}$ have no nontrivial factorization.  But $\mathcal{D}$ is symmetrically (and trivially) braided, so it must be equivalent as a fusion category to $\mathrm{Rep}(G)$ for some finite group $G$.  The only finite group $G$ of order 16 which is not a nontrivial direct product, with eight linear characters of order at most 2, and two $2$-dimensional irreducible characters is the central product $C_4\circ Q_8$.
\end{example}

\begin{conjecture}\label{con}
Let $\mathcal{C}$ be a braided generalized near-group fusion category.  If $\mathcal{C}$ is nilpotent, then the Grothendieck ring of $\mathcal{C}$ is isomorphic to $R(m,n)\times\mathbb{Z}K$ for some $m,n\in\mathbb{Z}_{\geq1}$ and finite abelian group $K$.
\end{conjecture}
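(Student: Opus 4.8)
The plan is to carry the reductions indicated before Proposition~\ref{elem} to their conclusion and then isolate the single genuinely braided input that remains. Since $\mathcal{C}$ is nilpotent, $\mathcal{C}_\mathrm{ad}$ is pointed, so $XX^\ast$ is a sum of invertibles for every noninvertible $X$; hence $r=0$ and Equation~\eqref{eq:1} reads $d^2=|H_R|$. Nilpotency also means $X\otimes Y$ is a sum of invertibles for all noninvertible $X,Y$, so the partition of $\mathcal{O}(\mathcal{C})$ into invertibles and noninvertibles is itself a $C_2$-grading with pointed trivial component, and Proposition~\ref{elem} gives $H:=\mathcal{O}(\mathcal{C}_\mathrm{ad})\cong C_2^{n'}$, which coincides with the fixed-point subgroup $H_R$, so that $d^2=|H|=2^{n'}$. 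Factoring $\mathcal{C}\simeq\boxtimes_p\mathcal{C}_p$ by \cite{drinfeld2007grouptheoretical}, every simple dimension is a product over the factors, and since $d$ involves only the prime $2$ each odd factor is pointed. Absorbing these into $\mathbb{Z}K$, it suffices to prove that a braided generalized near-group category $\mathcal{A}$ of dimension $2^n$ has Grothendieck ring $R(m,n')\times\mathbb{Z}K'$ for some $m$ and some finite abelian $2$-group $K'$.

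I would next pin down the universal grading $U:=U(\mathcal{A})$. Its trivial component is $\mathcal{A}_\mathrm{ad}\simeq H\cong C_2^{n'}$, so every component has dimension $2^{n'}$ and $|U|=2^{n-n'}$. The invertibles act transitively on the noninvertibles with stabiliser $H$, and $XX^\ast$ is self-dual, $H$-invariant, and of dimension $|H|$, whence $XX^\ast=\sum_{h\in H}h$. Comparing $\dim(\mathcal{A}_u)=\sum_{X\in\mathcal{O}(\mathcal{A}_u)}\dim(X)^2=2^{n'}$ with $d^2=2^{n'}$ forces each component to be \emph{either} a single $H$-coset of invertibles \emph{or} a single noninvertible object. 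The $C_2$-grading above separates the two types, so the noninvertible components form exactly one coset of an index-$2$ subgroup $U_0\le U$, the even part $\mathcal{A}_{U_0}=\mathcal{A}_\mathrm{pt}$ is pointed on an abelian $2$-group $G=G_R$, and the induced $U_0$-grading of $G$ with trivial component $H$ yields a canonical isomorphism $G/H\cong U_0$.

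The content of the conjecture is now concentrated in a single splitting statement. A routine argument on finite abelian $2$-groups shows that any surjection $U\to U/U_0\cong C_2$ is carried by a cyclic direct factor, i.e.\ $U\cong C_{2^m}\times K'$ with $K'\le U_0$ and $U_0\cong C_{2^{m-1}}\times K'$; correspondingly $G/H\cong C_{2^{m-1}}\times K'$. Comparing with Example~\ref{ty}, where the invertibles of $R(m,n')$ are $C_{2^{m-1}}\times C_2^{n'}$ with $C_2^{n'}$ a \emph{direct} factor, one sees that the Grothendieck ring of $\mathcal{A}$ is $R(m,n')\times\mathbb{Z}K'$ precisely when $H$ is a direct summand of $G$, equivalently when no nontrivial element of $H$ is a square in $G$; the factorisation of the noninvertible fusion rules is then read off from the grading. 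So the whole problem reduces to proving that $H\cap G^2=\{e\}$, where $G^2=\{g^2:g\in G\}$.

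This last step is the main obstacle, and it is where the hypothesis that $\mathcal{A}$ is \emph{braided} must finally be used: at the level of fusion rings the condition can fail, for example with $H=C_2$ embedded inside the $C_4$ of $G=C_4\times C_2$ (so that $H\subset G^2$ and $G/H\cong C_2\times C_2$), and no combinatorial feature of the ring excludes it. The premodular data~\eqref{nein} supply relations among the twists --- for instance $\theta_{g^2}=\theta_g^{4}$ on the pre-metric group $(G,q)$, together with the fact from Lemma~\ref{tanlem}-type computations that $h\in H$ centralizes a noninvertible $X$ iff $\theta_h=1$ --- but these numerics do not by themselves forbid a square in $H$, so the argument seems to require the full hexagon and associativity constraints encoded in the classification of $C_2$-graded extensions of pointed categories \cite{MR3059669}. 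I would attack $H\cap G^2=\{e\}$ either directly through that cocycle data, or by inducting on $n-n'$ via de-equivariantization by a Tannakian $C_2\subset H$, halving $d$ and invoking the $N$-Ising and generalized Tambara--Yamagami classifications \cite{MR4167662}; the inductive route, however, is itself obstructed precisely when $H$ is super-Tannakian, exactly the dichotomy that forced the separate treatment in Lemma~\ref{nottan}.
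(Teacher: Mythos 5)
The statement you set out to prove is Conjecture \ref{con}: the paper offers \emph{no proof} of it and poses it explicitly to stimulate future research, so there is no argument of the author's to compare yours against. Read on its own terms, your proposal is not a proof either, and you say as much. The reductions in your first two paragraphs are sound and largely parallel the paper's discussion preceding Proposition \ref{elem}: nilpotency forces $r=0$ and $d^2=|H_R|$, the invertible/noninvertible partition is a $C_2$-grading, Proposition \ref{elem} makes $H$ elementary abelian, and the prime-power factorization of \cite{drinfeld2007grouptheoretical} reduces everything to the $2$-primary part. Your analysis of the universal grading (each component is either a single $H$-coset of invertibles or a single noninvertible simple, giving $G/H\cong U_0$) is correct, and the resulting reduction of the conjecture to the splitting statement $H\cap G^2=\{e\}$ is a genuine and useful reformulation. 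One caveat: the equivalence ``Grothendieck ring $\cong R(m,n')\times\mathbb{Z}K'$ iff $H$ is a direct summand of $G$'' is asserted (``one sees that\dots'') rather than proven. The Grothendieck ring carries one more invariant than the grading, namely the duality class $c\in(G/H)/(G/H)^2$ recording which noninvertible simple is dual to which; your claim is true, but it requires the additional (unstated) fact that every class in $(G/H)/(G/H)^2$ admits a lift generating a cyclic direct factor of $G/H$.

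The genuine gap is the final step, which is the entire content of the conjecture. You correctly observe that $H\cap G^2=\{e\}$ can fail for abstract fusion rings of this type (e.g.\ $H=C_2$ sitting inside the $C_4$ factor of $G=C_4\times C_2$), so braidedness must enter; but neither of your two proposed attacks is carried out. The route through the cocycle and bilinear-form data of \cite{MR3059669} is only named, not executed, and the inductive route via de-equivariantization by a Tannakian $C_2\subset H$ is, as you yourself note, obstructed precisely when the relevant symmetric subcategory is super-Tannakian --- the same dichotomy that forced the separate (and harder) treatment in Lemma \ref{nottan} as opposed to Lemma \ref{tan} in the irrational setting. What you have, then, is a plausible reduction plus a research plan: the conjecture remains open after your argument, exactly as it is left open in the paper.
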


\bibliographystyle{abbrv}
\bibliography{bib}

\end{document}